\newcommand{\Arc}{\mathrm{Arc}}
\newcommand{\Aut}{\mathrm{Aut}}
\newcommand{\Cay}{\mathrm{Cay}}
\newcommand{\Sy}{\mathrm{S}}
\newtheorem{thm}{Theorem}[section]
\newtheorem{cor}[thm]{Corollary}
\newtheorem{pro}[thm]{Proposition}
\newtheorem{lem}[thm]{Lemma}
\begin{document}

\title[Generalized quaternion groups with the $m$-DCI property]
{Generalized quaternion groups with the $m$-DCI property}
\author{Jin-Hua Xie}
\address{Jin-Hua Xie, School of Mathematics and Statistics, Beijing Jiaotong University, Beijing 100044, China}
\email{jinhuaxie@bjtu.edu.cn}

\author{Yan-Quan Feng}
\address{Yan-Quan Feng, School of Mathematics and Statistics, Beijing Jiaotong University, Beijing 100044, China}
\email{yqfeng@bjtu.edu.cn}

\author{Binzhou Xia}
\address{Binzhou Xia, School of Mathematics and Statistics, The University of Melbourne, Parkville, VIC
3010, Australia}
\email{binzhoux@unimelb.edu.au}

\begin{abstract}
A Cayley digraph $\Cay(G,S)$ of a finite group $G$ with respect to a subset $S$ of $G$ is said to be a CI-digraph if for every Cayley digraph $\Cay(G,T)$ isomorphic to $\Cay(G,S)$, there exists an automorphism $\sigma$ of $G$ such that $S^\sigma=T$. A finite group $G$ is said to have the $m$-DCI property for some positive integer $m$ if every Cayley digraph $\Cay(G,S)$ of $G$ with $|S|=m$ is a CI-digraph, and is said to be a DCI-group if $G$ has the $m$-DCI property for all $1\leq m\leq |G|$. Let $\mathrm{Q}_{4n}$ be a generalized quaternion group of order $4n$ with an integer $n\geq 3$, and let $\mathrm{Q}_{4n}$ have the $m$-DCI property for some $1 \leq m\leq 2n-1$. It is shown in this paper that $n$ is odd, and $n$ is not divisible by $p^2$ for any prime $p\leq m-1$. Furthermore, if $n\geq 3$ is a power of a prime $p$, then $\mathrm{Q}_{4n}$ has the $m$-DCI property if and only if $p$ is odd, and either $n=p$ or $1\leq m\leq p$.

\medskip
\noindent\textit{Key words:}~{Cayley digraph, CI-digraph, $m$-DCI property, Generalized quaternion group.}

\noindent\textit{MSC2020:}~{20B25, 05C25}
\end{abstract}
\maketitle

\section{Introduction}
Unless otherwise indicated, digraphs and graphs considered in this paper are finite with no parallel edges or loops, and groups are finite. For a digraph $\Gamma$, denote by $V(\Gamma)$, $E(\Gamma)$, $\Arc(\Gamma)$ and $\Aut(\Gamma)$ the vertex set, edge set, arc set, and automorphism group of $\Gamma$, respectively. If for some integer $m$, the in-valency or out-valency of every vertex of $\Gamma$ equals $m$, then we say that the digraph has \emph{in-valency $m$} or \emph{out-valency $m$}, respectively. Moreover, if the in-valency and out-valency of every vertex of a digraph both equal $m$, then we say that the digraph has \emph{valency $m$} or is \emph{$m$-valent}.

Let $G$ be a group and $S$ be a subset of $G$ with $1\notin S$. A digraph with vertex set $G$ and arc set $\{(g,sg)\mid g\in G,\,s\in S\}$ is said to be a \emph{Cayley digraph} of $G$ with respect to $S$, denoted by $\Cay(G,S)$. If $S=S^{-1}$, then both $(u,v)$ and $(v,u)$ are arcs for two adjacent vertices $u$ and $v$ in $\Cay(G,S)$, and $\Cay(G,S)$ is a graph by identifying the two arcs with one edge $\{u,v\}$. Clearly, a Cayley graph $\Cay(G,S)$ as well as its identifying Cayley digraph has the same valency $|S|$. Two Cayley digraphs $\Cay(G,S)$ and $\Cay(G,T)$ are said to be \emph{Cayley isomorphic} if $S^{\sigma}=T$ for some $\sigma \in \Aut(G)$, where $\Aut(G)$ is the automorphism group of $G$. Cayley digraphs are isomorphic if they are Cayley isomorphic, but the converse is not true. A subset $S$ of $G$ with $1\notin S$ is said to be a \emph{CI-subset} if $\Cay(G,S)\cong \Cay(G,T)$, for some $T\subseteq G$ with $1\notin T$, implies that they are Cayley isomorphic. In this case, $\Cay(G,S)$ is said to be a {\em CI-digraph}, or a {\em CI-graph} when $S=S^{-1}$. A group $G$ is said to be a DCI-group or a CI-group if all Cayley digraphs or Cayley graphs of $G$ are CI-digraphs or CI-graphs, respectively.

\'Ad\'am~\cite{Ad} conjectured that every finite cyclic group is a CI-group. Although this conjecture was disproved by Elspas and Turner~\cite{Elspas}, many researchers actively studied CI-groups and DCI-groups during the last fifty years and obtained great contributions, see \cite{Alspach,Babai,DE1,DT,Godsil1} for example. For cyclic DCI-groups and CI-groups, the classifications were finally completed by Muzychuk\cite{Mu1,Mu2}: a cyclic group of order $n$ is a DCI-group if and only if $n/\gcd(2,n)$ is square-free, and is a CI-group if and only if either $n/\gcd(2,n)$ is square-free or $n\in\{8,9,18\}$. A powerful method for studying DCI-groups or CI-groups comes from Schur ring theory, which was initiated by Schur and developed by Wielandt~(see~\cite[Chapter IV]{Wi}). In particular, this method is widely used to classify the DCI-groups and CI-groups among abelian groups, especially elementary abelian groups, refer to \cite{Feng,Kov,Mu3,MS,Sp,Sp1}. So far DCI-groups and CI-groups have been restricted to some particular groups (see \cite{Kov1,C.H.Li8}), and it is very difficult to determine whether these groups are DCI-groups or CI-groups.

For a positive integer $m$, a group $G$ is said to have the \emph{$m$-DCI property} or \emph{$m$-CI property} if all $m$-valent Cayley digraphs of $G$ are CI-digraphs or all Cayley graphs of $G$ of valency $m$ are CI-graphs, respectively. Clearly, if $G$ has the $m$-DCI property then $G$ has the $m$-CI property. A group $G$ is said to be an \emph{$m$-DCI-group} or \emph{$m$-CI-group} if $G$ has the $k$-DCI property or $k$-CI property for every positive integer $k\leq m$, respectively. Evidently, a group $G$ is a DCI-group or CI-group if $G$ has the $m$-DCI property or $m$-CI property for all $m\leq |G|$, respectively.

Considerable work has been done on the $m$-DCI property or $m$-CI property of a group, with interesting results obtained to characterize $m$-DCI-groups or $m$-CI-groups. In \cite{Fang1,Fang2,Fang3}, Fang and Xu completely classified abelian $m$-DCI-groups for a positive integer $m$ at most $3$. For an integer $n$ at least $3$ and $m\in \{1,2,3\}$, the dihedral group $\mathrm{D}_{2n}$ is $m$-DCI-group if and only if $n$ is odd~(see \cite{Qu}), and the generalized quaternion group $\mathrm{Q}_{4n}$ is $m$-DCI-group if and only if $n$ is odd~(see \cite{Ma}). In \cite{C.H.Li5}, Li, Praeger and Xu classified all finite abelian groups with the $m$-DCI property for a positive integer $m$ at most $4$, and they proposed a natural problem: characterize finite groups with the $m$-DCI property. For cyclic groups, Li~\cite{C.H.Li1} characterized the cyclic group of order $n$ with the $m$-DCI property. Soon after, Li \cite{C.H.Li6} proved that all Sylow subgroups of an abelian group with the $m$-DCI property are homocyclic. For more details, we refer to \cite{C.H.Li2,C.H.Li3,C.H.Li4,C.H.Li7} for example.

Recently, Xie, Feng and Kwon \cite{XFK} studied dihedral groups with the $m$-DCI property: if a dihedral group $G$ of order $2n$ has the $m$-DCI property for some $1\leq m\leq n-1$, then $n$ is odd and not divisible by the square of any prime less than $m$; moreover, the converse of this is true for prime power $n$, but in general it is unknown whether the converse is true.
In this paper, we consider the $m$-DCI property of generalized quaternion groups. Following \cite[(2.1)]{Ahmadi}, we call
\[
\mathrm{Q}_{4n}=\langle a,b\mid a^{2n}=1,\,b^2=a^n,\,a^b=a^{-1}\rangle
\]
the {\em generalized quaternion group} of order $4n$. Note that a generalized quaternion group is also called a dicyclic group (see \cite[Definition 1.1]{Mor}). For $n=1$, $\mathrm{Q}_{4}$ is the cylcic group of order $4$, and hence $\mathrm{Q}_4$ is a DCI-group by  \cite[Theorem 7.1]{C.H.Li7}. For $n=2$, $\mathrm{Q}_8$ is the quaternion group of order $8$, and $\mathrm{Q}_8$ is a DCI-group by \cite[Theorem~1.1]{SG}. Thus, we may assume that $n\geq 3$. For a group $G$, a subset $S$ of $G\setminus \{1\}$ is a CI-subset of $G$ if and only if the complement of $S$ in $G\setminus\{1\}$ is a CI-subset of $G$. To investigate the $m$-DCI property of $\mathrm{Q}_{4n}$, it suffices to consider $m$ such that $1\leq m\leq 2n-1$. As the first main result of this paper, we give necessary conditions for the $m$-DCI property of $\mathrm{Q}_{4n}$, which generalizes the necessary conditions for the $1$-DCI property of \cite[Lemma 3.1]{Ma}.
\begin{thm}\label{mainth1}
Let $G$ be the generalized quaternion group of order $4n$ with $n\geq 3$ such that $G$ has the $m$-DCI property for some $1\leq m\leq 2n-1$. Then $n$ is odd, and $n$ is not divisible by $p^2$ for any prime $p\leq m-1$.
\end{thm}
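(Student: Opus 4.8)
The plan is to prove both assertions by contraposition, producing in each case, for the given value of $m$, a connection set $S$ with $|S|=m$ that fails to be a CI-subset. Two structural features of $G=\mathrm{Q}_{4n}$ drive the argument. First, for $n\geq 3$ the subgroup $\langle a\rangle$ is the unique cyclic subgroup of order $2n$, since every element outside $\langle a\rangle$ has order $4<2n$; hence $\langle a\rangle$ is characteristic, every $\sigma\in\Aut(G)$ restricts on it to a power map $a\mapsto a^{r}$ with $\gcd(r,2n)=1$, and $\sigma$ preserves the decomposition $G=\langle a\rangle\cup\langle a\rangle b$. Second, the centre $Z=\langle a^{n}\rangle$ is characteristic of order $2$, its involution $a^{n}$ is fixed by every $\sigma$, and $G/Z\cong\mathrm{D}_{2n}$. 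I will combine these with two routine transfer lemmas: (i) if $S,T$ lie in a subgroup $H\le G$, then $\Cay(G,S)$ is the disjoint union of $[G:H]$ copies of $\Cay(H,S)$, so $\Cay(H,S)\cong\Cay(H,T)$ lifts to $\Cay(G,S)\cong\Cay(G,T)$; and (ii) writing $\pi\colon G\to G/Z$ for the quotient map, if $S=\pi^{-1}(\overline{S_{0}})\cup Z_{0}$ with $\overline{S_{0}}\subseteq (G/Z)\setminus\{1\}$ and $Z_{0}\subseteq Z\setminus\{1\}$, then $\Cay(G,S)$ is the lexicographic product $\Cay(\mathrm{D}_{2n},\overline{S_{0}})[\,\Cay(Z,Z_{0})\,]$.

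For the square-free condition, suppose a prime $p\le m-1$ satisfies $p^{2}\mid n$, so $p^{2}\mid 2n$ and $\langle a\rangle\cong\mathbb{Z}_{2n}$ contains the characteristic subgroup $C\cong\mathbb{Z}_{p^{2}}$. Writing $H=\langle p\rangle\le C$, the sets $(1+H)\cup\{p\}$ and $(2+H)\cup\{p\}$ both have $\Cay(C,\cdot)\cong\vec{C}_{p}[\vec{C}_{p}]$, yet no multiplier relates them (fixing the fibre generator $p$ forces $r\equiv 1\pmod p$, while matching the transversal parts forces $r\equiv 2\pmod p$); this is the classical obstruction showing $\mathbb{Z}_{p^{2}}$ is not a DCI-group, of valency $p+1$. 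By Li's classification of cyclic groups with the $m$-DCI property~\cite{C.H.Li1} this obstruction persists at every valency between $p+1$ and $2n-1$, yielding $S,T\subseteq\langle a\rangle$ with $|S|=m$, $\Cay(\langle a\rangle,S)\cong\Cay(\langle a\rangle,T)$, and $T\neq rS$ for all units $r$. Transfer lemma~(i) with $H=\langle a\rangle$ gives $\Cay(G,S)\cong\Cay(G,T)$; and since $\sigma\in\Aut(G)$ acts on $\langle a\rangle$ as a multiplier, $S^{\sigma}=rS\neq T$, so $S$ is not a CI-subset and $G$ fails the $m$-DCI property.

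For the parity condition I show that if $n$ is even then $G$ fails the $m$-DCI property for every $1\le m\le 2n-1$. When $m=1$, the order-$4$ elements $a^{n/2}\in\langle a\rangle$ and $b\notin\langle a\rangle$ give $\Cay(G,\{a^{n/2}\})\cong\Cay(G,\{b\})\cong n\vec{C}_{4}$, but no automorphism carries $a^{n/2}$ to $b$ since $\langle a\rangle$ is characteristic. For $m\ge 2$ I pass to the quotient: as $n$ is even, $\mathrm{D}_{2n}\cong G/Z$ fails the $k$-DCI property for every $1\le k\le n-1$ by~\cite{XFK}, so there exist $\overline{S_{0}},\overline{T_{0}}$ with $|\overline{S_{0}}|=k$, $\Cay(\mathrm{D}_{2n},\overline{S_{0}})\cong\Cay(\mathrm{D}_{2n},\overline{T_{0}})$, and $\overline{S_{0}}^{\,\overline{\sigma}}\neq\overline{T_{0}}$ for all $\overline{\sigma}\in\Aut(\mathrm{D}_{2n})$. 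Taking $S=\pi^{-1}(\overline{S_{0}})\cup Z_{0}$ and $T=\pi^{-1}(\overline{T_{0}})\cup Z_{0}$ gives valency $2k$ when $Z_{0}=\varnothing$ and valency $2k+1$ when $Z_{0}=\{a^{n}\}$; letting $k$ run through $1,\dots,n-1$ realises every even value in $[2,2n-2]$ and every odd value in $[3,2n-1]$, which together with the case $m=1$ covers all of $[1,2n-1]$. Lemma~(ii) supplies $\Cay(G,S)\cong\Cay(G,T)$ through the lexicographic-product isomorphism, while any $\sigma\in\Aut(G)$ fixes $Z$ and $a^{n}$ and descends to $\overline{\sigma}\in\Aut(\mathrm{D}_{2n})$, so $S^{\sigma}=T$ would force $\overline{S_{0}}^{\,\overline{\sigma}}=\overline{T_{0}}$, a contradiction. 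Hence $S$ is not a CI-subset.

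The conceptual core is light: it is precisely the characteristic-subgroup obstruction ``inside versus outside $\langle a\rangle$'', refined modulo $Z$ to distinguish a central element from a reflection. I therefore expect the main work to lie in the bookkeeping of transfer lemma~(ii)---verifying the lexicographic description of $\Cay(G,\pi^{-1}(\overline{S_{0}})\cup Z_{0})$, that the induced isomorphisms and the descent of automorphisms hold uniformly in $k$, and that the parity toggle by the central involution $a^{n}$ together with the $m=1$ gadget really exhausts $[1,2n-1]$. A secondary delicate point is the precise invocation of the cyclic classification~\cite{C.H.Li1} to guarantee a non-CI pair inside $\mathbb{Z}_{2n}$ of \emph{each} valency up to $2n-1$, rather than only at the minimal valency $p+1$.
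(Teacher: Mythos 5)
Your parity half is correct, and it takes a genuinely different route from the paper. Where the paper (Lemma~\ref{n-odd}) builds a self-contained isomorphism $\varphi$ between $\Cay(G,bH\cup K)$ and $\Cay(G,bH\cup bK)$ and runs a case analysis on $m\bmod 8$, you quotient by the characteristic centre $Z=\langle a^n\rangle$, describe $\Cay(G,\pi^{-1}(\overline{S_0})\cup Z_0)$ as a lexicographic product over $G/Z\cong\mathrm{D}_{2n}$, and import the dihedral failure from~\cite{XFK} (for $n$ even, $\mathrm{D}_{2n}$ fails the $k$-DCI property for all $1\leq k\leq n-1$). The descent argument is sound: every $\sigma\in\Aut(G)$ fixes $a^n$ and induces $\overline{\sigma}\in\Aut(\mathrm{D}_{2n})$, so $S^\sigma=T$ would force $\overline{S_0}^{\,\overline{\sigma}}=\overline{T_0}$, and the parity toggle by $Z_0\in\{\emptyset,\{a^n\}\}$ together with your $m=1$ gadget does exhaust $[1,2n-1]$. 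This is shorter than the paper's argument, at the price of outsourcing the combinatorial work to the dihedral theorem rather than keeping the proof self-contained.

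Your square-free half, however, has a genuine gap, and it is exactly the point you flagged as ``secondary'': Li's characterization (the paper's Proposition~\ref{cyclicgroupMP}) applies only in the range $p+1\leq m\leq N-(p+2)$ for a cyclic group of order $N=2n$, so it says nothing about the top window $2n-(p+1)\leq m\leq 2n-1$. Worse, your claim that a non-CI pair exists inside $\langle a\rangle\cong\mathbb{Z}_{2n}$ at \emph{every} valency up to $2n-1$ is false there: at $m=2n-1$ the only connection set is $\langle a\rangle\setminus\{1\}$ (the complete digraph), and at $m=2n-2$ the complement is a singleton, so both valencies are trivially CI for the cyclic group. Since Theorem~\ref{mainth1} at $m=2n-1$ asserts that $n$ is square-free, the top window must be treated, and no witness for it can live inside $\langle a\rangle$; your transfer lemma~(i) cannot be rescued by a different choice of cyclic pair. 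This is precisely why the paper's Lemma~\ref{p-odd}, after using Proposition~\ref{cyclicgroupMP} to reduce to $2n-(p+1)\leq m\leq 2n-1$, switches to connection sets meeting the $b$-cosets, namely $S=aP\cup(baP\cup\cdots\cup ba^{k-1}P)\cup(Z\cup Q)$ with $P=\langle a^{2n/p}\rangle$, obtains $\Cay(G,S)\cong\Cay(G,T)$ from Lemma~\ref{general}, and derives the contradiction $r\equiv-1\pmod p$ versus $r\equiv 1\pmod p$ via Lemma~\ref{cyclic group property}. To complete your proof you would need to supply an argument of this kind for the residual window; as written, the square-free assertion is proved only for $p+1\leq m\leq 2n-(p+2)$.
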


Based on Theorem~\ref{mainth1}, we have the following corollary, which can also be obtained from known results: $n$ is odd by \cite[Theorem 1.4]{Ma} and square free by \cite[Theorem 7.1]{C.H.Li7}.

\begin{cor}\label{cor1}
If the generalized quaternion group of order $4n$ with $n\geq 3$ is a DCI-group, then $n$ is odd and square-free.
\end{cor}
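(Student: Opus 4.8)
The plan is to derive the corollary directly from Theorem~\ref{mainth1}, since being a DCI-group is a much stronger hypothesis than having a single $m$-DCI property. By definition, if $G=\mathrm{Q}_{4n}$ is a DCI-group then $G$ has the $m$-DCI property for every $1\leq m\leq|G|=4n$, and in particular for every $m$ in the range $1\leq m\leq 2n-1$ to which Theorem~\ref{mainth1} applies. Applying that theorem with any single admissible value (for instance $m=1$) immediately yields that $n$ is odd, so the only substantive task is to establish square-freeness.

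For the square-free conclusion I would argue by contradiction: suppose some prime $p$ satisfies $p^2\mid n$. The idea is to select an exponent $m$ in the admissible range for which $p\leq m-1$, and then invoke the second assertion of Theorem~\ref{mainth1} to reach a contradiction. The natural choice is $m=p+1$, for which the hypothesis $p\leq m-1$ holds with equality. First I would verify that this $m$ lies in the permitted range $1\leq m\leq 2n-1$: from $p^2\mid n$ we get $p^2\leq n$, hence $p\leq\sqrt{n}<n\leq 2n-2$ for $n\geq 3$, so that $m=p+1\leq 2n-1$ as required. With $m=p+1$ admissible and $p\leq m-1$, Theorem~\ref{mainth1} forces $p^2\nmid n$, contradicting the choice of $p$. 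Hence no prime square divides $n$, that is, $n$ is square-free.

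I do not expect any genuine obstacle here, as the statement is essentially a formal consequence of Theorem~\ref{mainth1}. The only point requiring a moment's care is the range check for the chosen exponent $m=p+1$: one must use the inequality $p\leq\sqrt{n}$ (itself a consequence of $p^2\mid n$) together with $n\geq 3$ to guarantee that $m$ does not exceed $2n-1$, so that Theorem~\ref{mainth1} is indeed applicable to the prime $p$ in question.
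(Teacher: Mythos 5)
Your proposal is correct and follows exactly the route the paper intends: the corollary is stated as an immediate consequence of Theorem~\ref{mainth1}, obtained by noting that a DCI-group has the $m$-DCI property for every $m$, taking $m=1$ (or any admissible $m$) to get that $n$ is odd, and taking $m=p+1$ for each prime $p$ with $p^2\mid n$ to get square-freeness. Your range check $p+1\leq 2n-1$ is sound (indeed $p\mid n$ already gives $p\leq n$, so $p+1\leq n+1\leq 2n-1$ for $n\geq 2$), and no further argument is needed.
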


It is worth remarking that we do not know whether the converses of Theorem~\ref{mainth1} and Corollary~\ref{cor1} are true in general. However, we will show that they are true when $n$ is a prime power. Note that when $n$ is a power of a prime $p$, the conclusion in Theorem~\ref{mainth1} turn out to be that $p$ is odd and either $n=p$ or $m\leq p$.

We first illustrate the case $n=p$. Let $M$ be an abelian group of odd order such that every Sylow subgroup of $M$ is elementary abelian. Define $E(M,4)=M \rtimes \langle y\rangle$ such that $|y|=4$ and $x^y=x^{-1}$ for all $x \in M$, where $|y|=4$ is the order of $y$. By~\cite{Mu4}, $E(\mathbb{Z}_n,4)$ is a DCI-group if $(n,\varphi(n))=1$, where $\varphi(n)$ is the Euler function. Since $\mathrm{Q}_{4p}\cong E(\mathbb{Z}_p,4)$, it follows that $\mathrm{Q}_{4p}$ is a DCI-group for every odd prime $p$. In particular, the converse of Corollary~\ref{cor1} is true for prime power $n$.

Next let $n=4p^\ell$ with an odd prime $p$ and an integer $\ell\geq 2$. The following theorem asserts that $\mathrm{Q}_{4n}$ does have the $m$-DCI property for all $m\leq p$. In other words, $\mathrm{Q}_{4n}$ is a $p$-DCI-group.

\begin{thm}\label{mainth2}
Let $n\geq 3$ be a power of a prime $p$, and let $G$ be a generalized quaternion group of order $4n$. Then for $1\leq m\leq 2n-1$, $G$ has the $m$-DCI property if and only if $p$ is odd and either $n=p$ or $m\leq p$.
\end{thm}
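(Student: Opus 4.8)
I would first observe that the necessity (``only if'') is immediate from Theorem~\ref{mainth1}. Suppose $G=\mathrm{Q}_{4n}$ with $n=p^\ell$ has the $m$-DCI property for some $1\le m\le 2n-1$. By Theorem~\ref{mainth1}, $n$ is odd, whence $p$ is odd, and $n$ is divisible by the square of no prime $q\le m-1$. If $\ell\ge2$ then $p^2\mid n$, so $p$ cannot satisfy $p\le m-1$; that is, $m\le p$. Hence either $\ell=1$ (so $n=p$) or $m\le p$, which is exactly the asserted condition.

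For sufficiency (``if''), the case $n=p$ is already settled in the discussion preceding the theorem: $\mathrm{Q}_{4p}\cong E(\mathbb{Z}_p,4)$ is a DCI-group by~\cite{Mu4}, so it has the $m$-DCI property for every $m$. It then remains to prove that when $n=p^\ell$ with $\ell\ge2$, $p$ odd and $1\le m\le p$, every $S\subseteq G\setminus\{1\}$ with $|S|=m$ is a CI-subset. Here I would use the description $G=P\rtimes\langle b\rangle\cong\mathbb{Z}_{p^\ell}\rtimes\mathbb{Z}_4$, where $P=\langle a^2\rangle$ is the characteristic cyclic Sylow $p$-subgroup, $\langle b\rangle$ has order $4$ and inverts $P$, and $a^n$ is the unique involution. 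A preliminary reduction handles the disconnected case: if $\langle S\rangle\ne G$ then $\Cay(G,S)$ is a disjoint union of isomorphic connected digraphs on the proper subgroup $\langle S\rangle$, which is either cyclic or a smaller generalized quaternion group; a standard reduction then expresses the CI-property of $S$ in terms of the CI-property inside $\langle S\rangle$, which holds by the cyclic classification~\cite{C.H.Li1} and by induction on $|G|$. So I may assume $\langle S\rangle=G$.

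For the connected case the plan is to invoke Babai's criterion: $\Cay(G,S)$ is a CI-digraph if and only if any two regular subgroups of $A:=\Aut(\Cay(G,S))$ that are isomorphic to $G$ are conjugate in $A$. Let $R=G_R$ be the right regular representation and let $R'\le A$ be a second regular subgroup with $R'\cong G$. Each of $R$ and $R'$ has a characteristic cyclic Sylow $p$-subgroup of order $p^\ell$, namely the image of $P$; call these $P_R$ and $P_{R'}$, each semiregular on $G$ with four orbits of length $p^\ell$, the orbits of $P_R$ being the cosets of $P$. The first and main step is to conjugate $R'$ within $A$ so that $P_{R'}=P_R$: I would place $P_R$ and $P_{R'}$ in a common Sylow $p$-subgroup of $A$ and use that, because $|S|=m\le p$, the cyclic group $\mathbb{Z}_{p^\ell}$ has the $m$-DCI property by~\cite{C.H.Li1}; this pins down the block system formed by the $P$-orbits and lets Sylow's theorem align $P_R$ with $P_{R'}$. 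Once $P_R=P_{R'}=P$, the second step is to conjugate the order-$4$ complements by an element of $N_A(P)$: writing $R=P\rtimes\langle\beta\rangle$ and $R'=P\rtimes\langle\beta'\rangle$ with $\beta,\beta'$ of order $4$ inverting $P$, I would show that $\langle\beta\rangle$ and $\langle\beta'\rangle$ are $N_A(P)$-conjugate, the available freedom matching exactly the automorphisms $a\mapsto a^i,\ b\mapsto a^jb$ of $G$.

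The crux of the argument, and the only place where the hypothesis $m\le p$ is indispensable, is this first step: controlling the Sylow $p$-subgroups of the \emph{a priori} large group $A$ and forcing the two cyclic subgroups $P_R,P_{R'}$ to coincide after conjugation. Once the $P$-orbit block system is shown to be shared by $R$ and $R'$, the residual complement-conjugacy is a finite computation governed by the faithful action on $P\cong\mathbb{Z}_{p^\ell}$ and the rigidity of the unique involution $a^n$; carrying it out produces the required conjugating element of $A$, and hence, via Babai's criterion, establishes that $S$ is a CI-subset.
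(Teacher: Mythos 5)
Your necessity argument and your treatment of the case $n=p$ coincide with the paper's, and your overall plan for the connected case (Babai's criterion, align the cyclic $p$-subgroups first, then conjugate the order-$4$ complements inside $\mathrm{N}_A(P)$ --- the latter is essentially the paper's Lemma~\ref{sub-conjugate}) is the right skeleton. But there are two genuine gaps. The smaller one is the reduction to $\langle S\rangle$: this is not a ``standard reduction.'' Knowing that $S$ is a CI-subset of $\langle S\rangle$ does not by itself make $S$ a CI-subset of $G$; one needs $\langle S\rangle\cong\langle T\rangle$ (true here because subgroups of $\mathrm{Q}_{4p^{\ell}}$ of equal order are isomorphic) and, crucially, that every isomorphism between subgroups of $G$ extends to an automorphism of $G$. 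The paper supplies this via the homogeneity of $\mathrm{Q}_{4n}$ for odd $n$ (Lemma~\ref{homogeneous}, from \cite{Ma}), applied twice --- once to move $\langle T\rangle$ onto $\langle S\rangle$ and once to extend the internal automorphism carrying $S$ to $T^{\delta}$. Your appeal to ``induction on $|G|$'' is not set up and is unnecessary: the paper proves the connected statement for all $\mathrm{Q}_{4p^{\ell}}$ directly (Lemma~\ref{p-power-CI-Q}).

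The serious gap is exactly at what you yourself call the crux. When $m<p$, or more generally when $p\nmid|A_1|$, the groups $P_R$ and $P_{R'}$ are themselves Sylow $p$-subgroups of $A$ (since $|A|=4p^{\ell}|A_1|$), so Sylow's theorem conjugates them and Lemma~\ref{sub-conjugate} finishes; this is the paper's Lemma~\ref{p-CI-Q}(a), and your sketch is fine there. But when $m=p$ and $p\mid|A_1|$ --- equivalently, when $\Gamma$ is arc-transitive of valency $p$ --- the Sylow $p$-subgroups of $A$ have order at least $p^{\ell+1}$, two isomorphic subgroups lying in a common Sylow $p$-subgroup need not be conjugate in $A$, and the $m$-DCI property of $\mathbb{Z}_{p^{\ell}}$ from \cite{C.H.Li1} gives no purchase: it concerns Cayley digraphs of the cyclic group on $p^{\ell}$ vertices, not the conjugacy or block systems of semiregular subgroups of $\Aut(\Gamma)$ on $4p^{\ell}$ points. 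This case is the real content of the paper and occupies its longest proof, Lemma~\ref{p-CI-Q}(b): one quotients by $R(\langle a^2\rangle)$, applies the normal-quotient dichotomy of Lemma~\ref{no-arc} to get either an $R(H)$-cover of a $4$-vertex quotient (forcing $p=3$, settled by the $3$-DCI result of \cite{Ma}) or quotient digraph $\overrightarrow{\mathrm{C}}_4$, and in the latter case runs a centralizer and regular-$p$-group analysis (Proposition~\ref{regular-p-group}) that forces $\Gamma\cong\overrightarrow{\mathrm{C}}_{4p^{\ell-1}}[\overline{\mathrm{K}}_p]$, contradicting a previously established block-system obstruction; the companion cyclic case with $p\mid|A_1|$ likewise needs the arc-transitive circulant classification of \cite{C.H.Li9}. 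Nothing in your outline substitutes for this analysis, so as written the proposal stalls precisely on the arc-transitive valency-$p$ digraphs.
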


After this Introduction, we introduce some preliminary results in Section~2. Then Theorems~\ref{mainth1} and~\ref{mainth2} will be proved in Sections 3 and 4, respectively.

\section{Preliminaries}

In this section we give some basic concepts and facts that will be used later. For a positive integer $n$ and a prime $p$, denote by $n_p$ the largest $p$-power dividing $n$ and denote $n_{p'}=n/n_p$. Denote by $\mathrm{C}_n$ the undirected cycle of length $n$ and denote by $\overrightarrow{\mathrm{C}}_n$ the directed cycle of length $n$. Denote by $\mathrm{K}_n$ the complete graph with $n$ vertices in which two arbitrary vertices are adjacent, and denote by $\overline{\mathrm{K}}_n$ the empty graph with $n$ vertices in which no two vertices are adjacent. A digraph $\overrightarrow{\mathrm{K}}_{m,n}$ is called a \emph{complete bipartite digraph} if its vertex set can be partitioned into two subsets $X$ and $Y$ such that $|X|=m$ and $|Y|=n$ and its arc set is $\{(x,y)\mid x\in X,\,y\in Y\}$.

Let $G$ be a group. The \emph{commutator} of elements $x$ and $y$ in $G$ is $[x,y]=x^{-1}y^{-1}xy$. The \emph{derived group} $G'$ of $G$ is $\langle [x,y]\mid x, y\in G\rangle$. For a subgroup $H$ of $G$, denote the normalizer and centralizer of $H$ in $G$ by $\mathrm{N}_G(H)$ and $\mathrm{C}_G(H)$, respectively.
The following result is from \cite[Chapter 2, Theorem 1.6]{Suzuki}.

\begin{pro}\label{proper-p-group}
Let $G$ be a $p$-group for some prime $p$ and let $H$ be a proper subgroup of $G$. Then $\mathrm{N}_G(H)$ properly contains $H$, that is, $\mathrm{N}_G(H)>H$.
\end{pro}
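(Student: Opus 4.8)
The plan is to argue by induction on $|G|$, using the basic fact that every nontrivial finite $p$-group has nontrivial center (a consequence of the class equation). If $G$ is trivial there is nothing to prove, so I assume $G\neq 1$ and let $Z$ denote the center of $G$, so that $Z>1$. The proof then divides into two cases according to whether $Z$ is contained in the proper subgroup $H$.

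In the first case, suppose $Z\not\subseteq H$. Every element of $Z$ centralizes, and hence normalizes, $H$, so $Z\subseteq\mathrm{N}_G(H)$. Together with the automatic inclusion $H\subseteq\mathrm{N}_G(H)$ this yields $\langle H,Z\rangle\subseteq\mathrm{N}_G(H)$, and since $Z$ contains an element outside $H$ we get $\langle H,Z\rangle>H$. Hence $\mathrm{N}_G(H)>H$, and no induction is needed here.

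In the second case, suppose $Z\subseteq H$, and pass to the quotient $\bar G=G/Z$, a $p$-group of strictly smaller order. Then $\bar H=H/Z$ is a proper subgroup of $\bar G$ (proper precisely because $H<G$), so the inductive hypothesis gives $\mathrm{N}_{\bar G}(\bar H)>\bar H$. I would then invoke the correspondence-theorem identity $\mathrm{N}_{\bar G}(\bar H)=\mathrm{N}_G(H)/Z$, which is where the hypothesis $Z\subseteq H$ is used, to lift the strict inclusion back: $\mathrm{N}_G(H)/Z>H/Z$ forces $\mathrm{N}_G(H)>H$, completing the induction.

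The only delicate point is the normalizer-correspondence identity $\mathrm{N}_{G/Z}(H/Z)=\mathrm{N}_G(H)/Z$, and I expect this to be the main obstacle, modest as it is. One checks that $gZ$ normalizes $H/Z$ in $G/Z$ if and only if $g$ normalizes $H$ in $G$; this relies on $Z$ being normal in $G$ and on $Z\subseteq H$, so that $H$ is exactly the full preimage of $H/Z$ under the natural projection $G\to G/Z$. With this identity in hand, the two cases together establish the normalizer condition for all finite $p$-groups.
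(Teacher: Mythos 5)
Your proof is correct. Note, however, that the paper itself contains no argument for this proposition: it is quoted verbatim from Suzuki \cite[Chapter 2, Theorem 1.6]{Suzuki} and used as a black box, so there is no in-paper proof to compare against. Your center-induction argument is one of the two standard proofs of the normalizer-growth property, and all the delicate points are handled properly: Case 1 correctly uses that central elements normalize every subgroup; in Case 2 the identity $\mathrm{N}_{G/Z}(H/Z)=\mathrm{N}_G(H)/Z$ is exactly where $Z\leq H$ is needed (so that $H$, and likewise $gHg^{-1}\supseteq gZg^{-1}=Z$, is the full preimage of its image in $G/Z$), and the strict inclusion lifts back because both $H$ and $\mathrm{N}_G(H)$ contain $Z$. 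The induction is well-founded since $Z\neq 1$ forces $|G/Z|<|G|$. For the record, the other standard route is shorter: let $H$ act on the coset space $G/H$ by right multiplication; the number of fixed cosets is congruent to $|G:H|\equiv 0\pmod p$, the coset $H$ itself is fixed, so at least $p$ cosets $Hg$ are fixed, and $Hg$ fixed means $g\in\mathrm{N}_G(H)$, giving $|\mathrm{N}_G(H):H|\geq p$. That version avoids both the quotient construction and the induction, but your argument is equally valid.
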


Let $p$ be a prime. A finite group $G$ is said to be \emph{$p$-abelian} if $(xy)^p=x^py^p$ for all $x$ and $y$ in $G$. A $p$-group $G$ is called a \emph{regular $p$-group} if for arbitrary two elements $x$ and $y$ in $G$, there exists $c_1,c_2,\ldots,c_r$ in the derived group $\langle x,y\rangle'$ of $\langle x,y\rangle$ such that $(xy)^p=x^py^pc_1^pc_2^p\cdots c_r^p$. The following proposition is from \cite[Proposition 3]{Mann} and \cite[Proposition 2.3]{ZSX}.

\begin{pro}\label{regular-p-group}
Let $G$ be a $p$-group for some prime $p$. If every subgroup of $G'$ can be generated by at most $(p-1)/2$ elements, then $G$ is a regular $p$-group. Moreover, a regular $p$-group $G$ is $p$-abelian if and only if $G'$ has exponent $p$.
\end{pro}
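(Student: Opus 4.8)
The plan is to treat the two assertions separately, both through Philip Hall's collection formula, since regularity is by definition a condition on the $2$-generated subgroups $H=\langle x,y\rangle$. First I would record the Hall--Petrescu identity $x^py^p=(xy)^p\prod_{i=2}^{p}c_i^{\binom{p}{i}}$ with $c_i\in\gamma_i(H)$, so that
\[
(xy)^p=x^py^p\,\prod_{i=2}^{p}c_i^{-\binom{p}{i}}.
\]
For $2\le i\le p-1$ the binomial coefficient $\binom{p}{i}$ is divisible by $p$ and $c_i\in\gamma_i(H)\le H'$, so each factor $c_i^{-\binom{p}{i}}$ is already a $p$-th power of an element of $H'$ and is harmless for regularity; the only genuine obstruction is the single term $c_p^{-1}$ coming from $\binom{p}{p}=1$, which lies in $\gamma_p(H)$. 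Thus $G$ is regular as soon as one shows $\gamma_p(H)\le\mho_1(H')$ for every $2$-generated subgroup $H$, where $\mho_1(H')=\langle h^p\mid h\in H'\rangle$.

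For the first assertion I would use the hypothesis that every subgroup of $G'$ --- in particular $\gamma_p(H)$ and its overgroups inside $H'$ --- is generated by at most $(p-1)/2$ elements. The aim is to push the residual commutator $c_p\in\gamma_p(H)$ into $\mho_1(H')$ by an inductive collection argument: the low rank of the subgroups of $H'$ should force the weight-$p$ commutators to be expressible through $p$-th powers once the higher terms of the lower central series are collected, so that $\gamma_p(H)\le\mho_1(H')$ and regularity follows from the display above. I expect this to be the main obstacle, since it is exactly the delicate part of the collection process where the bound $(p-1)/2$ enters, and it is the place where one must invoke the Lie-theoretic or inductive machinery underlying the criterion rather than a direct manipulation.

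For the ``moreover'' part I would argue through the regular power formula $(xy)^p\equiv x^py^p\pmod{\mho_1(\langle x,y\rangle')}$. If $G'$ has exponent dividing $p$, then every $c\in\langle x,y\rangle'\le G'$ satisfies $c^p=1$, so the correction term $c_1^p\cdots c_r^p$ in the definition of regularity is trivial and $(xy)^p=x^py^p$; hence $G$ is $p$-abelian. For the converse I would use the standard fact for regular $p$-groups that the power defects $(xy)^p(x^py^p)^{-1}$, as $x,y$ range over $G$, generate exactly $\mho_1(G')$: if $G$ is $p$-abelian then all these defects are trivial, so $\mho_1(G')=1$, which is equivalent to $\exp(G')\mid p$ since $\mho_1(G')=\langle g^p\mid g\in G'\rangle$. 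The substantive point here is the inclusion $\mho_1(G')\le\langle (xy)^p(x^py^p)^{-1}\mid x,y\in G\rangle$, again a consequence of the collection calculus in regular groups; granting it, the two implications close the equivalence.
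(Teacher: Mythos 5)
The paper offers no proof of this proposition to compare against: it is quoted verbatim from Mann [Proposition 3] and Zhang--Song--Xu [Proposition 2.3], so your argument has to stand on its own. For the first assertion it does not. Your reduction is correct and standard: by Hall--Petrescu, with $H=\langle x,y\rangle$, the defect $(xy)^p(x^py^p)^{-1}$ lies in $\mho_1(H')\gamma_p(H)$ since $p\mid\binom{p}{i}$ for $2\le i\le p-1$, and since the set of products $c_1^p\cdots c_r^p$ with $c_i\in H'$ is exactly $\mho_1(H')$, regularity would indeed follow from $\gamma_p(H)\le\mho_1(H')$ for all $2$-generated $H$. But at precisely this point you write ``should force'' and ``I expect this to be the main obstacle'' and stop. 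That inclusion is not a technicality to be deferred --- it is the entire content of Mann's theorem, the only place the bound $(p-1)/2$ enters, and no form of the inductive collection argument is supplied. What you have is a correct framing plus an acknowledged hole where the theorem lives; the paper handles this by citation, which is the honest alternative if one does not reproduce Mann's argument.

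In the ``moreover'' part, the forward direction is fine and immediate from the definition. The converse, however, rests on a lemma you grant without proof: that the defects $(xy)^p(x^py^p)^{-1}$ generate $\mho_1(G')$ in a regular $p$-group. The inclusion you actually need, $\mho_1(G')\le\langle\text{defects}\rangle$, is not a citable off-the-shelf fact; applied to the quotient by the defect subgroup it is essentially equivalent to the very statement being proved, so as written the argument is circular at its crux. The irony is that this direction needs neither regularity nor any collection calculus. If $G$ is $p$-abelian, then from $xy=x(yx)x^{-1}$ one gets $x^py^p=(xy)^p=x(yx)^px^{-1}=xy^px^{p-1}$, hence $[x^{p-1},y^p]=1$ for all $x,y\in G$; since $g\mapsto g^{p-1}$ is surjective on a finite $p$-group ($p-1$ is coprime to $|G|$), this gives $\mho_1(G)\le \mathrm{Z}(G)$. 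Then, because $\phi\colon g\mapsto g^p$ is an endomorphism, $\mho_1(G')=\phi(G')=\phi(G)'=\mho_1(G)'=1$, as $\mho_1(G)$ is central and hence abelian; so $G'$ has exponent dividing $p$. Substituting this for your unproved ``standard fact'' repairs the second half completely (and shows regularity is irrelevant there); the first half remains the substantive gap.
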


Let $\Cay(G,S)$ be a Cayley digraph of a group $G$ with respect to $S$. For a given $g\in G$, the right multiplication $R(g)$ is a permutation on $G$ such that $x^{R(g)}=xg$ for every $x\in G$. Clearly, $R(g)$ is an automorphism of $\Cay(G,S)$. Let $R(G)=\{R(g)\mid g\in G\}$. Then $R(G)$ is a regular group of automorphisms of $\Cay(G,S)$, which is called \emph{the right regular representation} of $G$. The following well-known Babai's criterion is from \cite{Babai}~(also see \cite[Theorem 2.4]{C.H.Li8}).

\begin{pro}\label{CI-graph-prop}
A Cayley digraph $\Cay(G,S)$ is a CI-digraph if and only if every regular subgroup of $\Aut(\Cay(G,S))$ isomorphic to $G$ is conjugate to $R(G)$ in $\Aut(\Cay(G,S))$.
\end{pro}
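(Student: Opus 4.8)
The plan is to derive both implications of this criterion from two structural inputs together with a short bookkeeping of conjugations. The first input is the well-known description of the normalizer of the right regular representation in the full symmetric group, namely $N_{\mathrm{Sym}(G)}(R(G)) = \Hol(G) = R(G)\rtimes\Aut(G)$; the second is the elementary observation that a group isomorphism induces a digraph isomorphism of the associated Cayley digraphs with correspondingly transformed connection set. Writing permutations on the right and $X^\beta = \beta^{-1}X\beta$, I would first record the identities $\sigma^{-1}R(g)\sigma = R(g^\sigma)$ for $\sigma\in\Aut(G)$, so that $R(G)^\sigma = R(G)$, and $R(G)^\theta = R(H)$ for a group isomorphism $\theta\colon G\to H$ regarded as a bijection of vertex sets. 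These are one-line verifications, but they are what make the two halves of the argument fit together.

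For the forward implication, assume $\Gamma = \Cay(G,S)$ is a CI-digraph and let $H\le A := \Aut(\Gamma)$ be regular with $H\cong G$. Since $H$ is regular on the vertex set $G$, the evaluation map $\phi\colon H\to G$, $h\mapsto 1^h$, is a bijection, and I would transport $\Gamma$ along $\phi$ to a digraph $\Gamma'$ on vertex set $H$ which is invariant under right multiplication; hence $\Gamma' = \Cay(H,S')$, where $S'$ is the out-neighbourhood of $1_H$, and a direct check gives $R(H)^\phi = H$. Fixing an isomorphism $\theta\colon G\to H$ and putting $T := \theta^{-1}(S')$ yields $\Cay(G,T)\cong\Cay(H,S')\cong\Gamma=\Cay(G,S)$, so the CI hypothesis supplies $\sigma\in\Aut(G)$ with $S^\sigma = T$. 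Then $\sigma$, $\theta$, $\phi$ are digraph isomorphisms $\Cay(G,S)\to\Cay(G,T)\to\Cay(H,S')\to\Gamma$, their composite $\alpha := \sigma\theta\phi$ lies in $A$, and the recorded identities give $R(G)^\alpha = ((R(G)^\sigma)^\theta)^\phi = R(H)^\phi = H$, so $H$ is conjugate to $R(G)$ in $A$.

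For the converse, assume every regular subgroup of $A$ isomorphic to $G$ is conjugate to $R(G)$, and let $\psi\colon\Cay(G,T)\to\Cay(G,S)$ be an isomorphism for some $T\subseteq G\setminus\{1\}$. Since $R(G)\le\Aut(\Cay(G,T))$ is regular, the conjugate $H := R(G)^\psi$ is a regular subgroup of $A$ isomorphic to $G$, and the hypothesis provides $\alpha\in A$ with $H^\alpha = R(G)$. Then $\beta := \psi\alpha$ normalizes $R(G)$, so $\beta\in\Hol(G)$ and I may write $\beta = R(g)\sigma$ with $g\in G$ and $\sigma\in\Aut(G)$; as $\alpha\in A$, the map $\beta$ is still an isomorphism $\Cay(G,T)\to\Cay(G,S)$. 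Computing the image of an arbitrary arc $(x,tx)$ with $t\in T$ under $x\mapsto x^\sigma g^\sigma$ forces $t^\sigma\in S$, whence $T^\sigma\subseteq S$; equality of out-valencies then gives $T^\sigma = S$, so $T = S^{\sigma^{-1}}$ with $\sigma^{-1}\in\Aut(G)$, and therefore $\Cay(G,S)$ is a CI-digraph.

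I expect the main obstacle to be conceptual rather than computational: one must keep straight three distinct regular actions — the representation $R(G)$ on $G$, the representation $R(H)$ on $H$, and the transported action of $H$ on the vertex set — and verify that group (iso)morphisms conjugate them exactly as claimed. The single genuinely structural ingredient is the identity $N_{\mathrm{Sym}(G)}(R(G)) = \Hol(G)$, which I would establish by showing that a permutation normalizing $R(G)$ and fixing the vertex $1$ is forced to be an automorphism of $G$, so that any normalizing permutation differs from such an automorphism by an element of $R(G)$.
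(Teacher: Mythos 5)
Your proof is correct, and it is essentially the standard argument for this result: the paper itself does not prove the proposition but cites it as Babai's criterion from \cite{Babai} (see also \cite[Theorem 2.4]{C.H.Li8}), and your two implications—transporting a regular subgroup $H\leq\Aut(\Cay(G,S))$ to a Cayley digraph $\Cay(H,S')$ via the evaluation bijection, and using $\mathrm{N}_{\mathrm{Sym}(G)}(R(G))=\Hol(G)=R(G)\rtimes\Aut(G)$ to extract $\sigma\in\Aut(G)$ with $T^\sigma=S$—are exactly the steps of that classical proof. The only cosmetic slip is that the transported digraph on vertex set $H$ is obtained by pulling back along $\phi\colon H\to G$ (i.e.\ pushing forward along $\phi^{-1}$), not along $\phi$ itself; everything you then assert about $\Gamma'$, $S'$ and $R(H)^\phi=H$ is correct as stated.
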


The following result says that the $m$-DCI property of a group is hereditary by subgroups, which can be proved by the same argument as that for the $m$-CI property in~\cite[Lemma~8.2]{C.H.Li7}.

\begin{pro}\label{subgroup}
Suppose that a finite group $G$ has the $m$-DCI property for a positive integer $m$. Then every subgroup of $G$ has the $m$-DCI property.
\end{pro}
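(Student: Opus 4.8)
The plan is to combine the disjoint-union structure of $\Cay(G,S)$ with Babai's criterion (Proposition~\ref{CI-graph-prop}). Let $H\leq G$ and let $S\subseteq H\setminus\{1\}$ with $|S|=m$; I want to show that $\Cay(H,S)$ is a CI-digraph of $H$. First I record the elementary observation that, since $S\subseteq H$, the subdigraph of $\Cay(G,S)$ induced on each left coset $Hx$ is isomorphic to $\Cay(H,S)$ via $h\mapsto hx$, and distinct left cosets carry no arcs between them. Hence $\Cay(G,S)$ is the disjoint union of the $[G:H]$ blocks $\{Hx\}$, each a copy of $\Delta:=\Cay(H,S)$; I write $\mathcal{B}$ for this block system and $B_0=H$ for the block containing $1$. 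Because all blocks are identified with $\Delta$ through a coset transversal, the wreath product $\Aut(\Delta)\wr\mathrm{Sym}(\mathcal{B})$ embeds into $A:=\Aut(\Cay(G,S))$.

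By Proposition~\ref{CI-graph-prop} it suffices to take an arbitrary regular subgroup $\bar{H}\leq\bar{A}:=\Aut(\Delta)$ with $\bar{H}\cong H$ and prove that $\bar{H}$ is conjugate to $R(H)$ in $\bar{A}$. The key step is to manufacture from $\bar{H}$ a regular subgroup $G^\ast\leq A$ with $G^\ast\cong G$. Here I use that $R(G)\leq A$ preserves $\mathcal{B}$, acting on $\mathcal{B}$ as the coset action of $G$ and inducing on $B_0$ the right regular representation $R(H)|_{B_0}\cong H$; thus, after the transversal identification, $R(G)\leq(R(H)|_{B_0})\wr\mathrm{Sym}(\mathcal{B})$. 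Since $R(H)|_{B_0}$ and $\bar{H}$ are regular subgroups of $\mathrm{Sym}(B_0)$ of the same isomorphism type, there is $\pi_0\in\mathrm{Sym}(B_0)$ with $(R(H)|_{B_0})^{\pi_0}=\bar{H}$. Applying $\pi_0$ uniformly in every block coordinate yields $\hat{\pi}$ that conjugates $(R(H)|_{B_0})\wr\mathrm{Sym}(\mathcal{B})$ onto $\bar{H}\wr\mathrm{Sym}(\mathcal{B})\leq\bar{A}\wr\mathrm{Sym}(\mathcal{B})\leq A$. I then set $G^\ast:=R(G)^{\hat{\pi}}$, which is regular (a conjugate of the regular group $R(G)$), isomorphic to $G$, contained in $A$, and induces $\bar{H}$ on $B_0$.

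Now the hypothesis enters: as $G$ has the $m$-DCI property and $|S|=m$, the digraph $\Cay(G,S)$ is a CI-digraph, so by Proposition~\ref{CI-graph-prop} there is $\eta\in A$ with $(G^\ast)^\eta=R(G)$. Composing $\eta$ with a suitable element of $R(G)$, I may assume $1^\eta=1$. Restricting to $B_0$ and its image then yields a digraph isomorphism sending $\bar{H}$ to a right regular representation $R(H')$ of some subgroup $H'\leq G$ with $H'\cong H$ and $S\subseteq H'$ (the inclusion following because an isomorphism preserves out-valency $m$). When $H'=H$ this $\eta$ restricts to an element $\bar{\eta}\in\bar{A}$ with $\bar{H}^{\bar{\eta}}=R(H)$, and the standard identification $\mathrm{N}_{\mathrm{Sym}(H)}(R(H))=\Hol(H)=R(H)\rtimes\Aut(H)$ converts $\bar{\eta}$ into an automorphism of $H$; tracing $S$ through it produces the desired $\sigma\in\Aut(H)$.

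The main obstacle is precisely this endgame, namely guaranteeing that the conjugator $\eta$ returns the block $B_0$ to a copy of $H$ rather than to a different isomorphic subgroup $H'$ of $G$. Block systems of the regular group $R(G)$ through the point $1$ correspond bijectively to subgroups of $G$, so $\eta(B_0)$ is a priori only known to be some subgroup $H'$ of order $|H|$; the point requiring care is to exploit the extra data $S\subseteq H\cap H'$ together with the isomorphism $\Cay(H,S)\cong\Cay(H',S)$ (and, when $\Delta$ is disconnected, the compatibility of $\eta$ with the connected components refining $\mathcal{B}$) to reduce to the case $H'=H$. Once this is settled, the remaining manipulations are the routine bookkeeping above. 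This is the same mechanism as in \cite[Lemma~8.2]{C.H.Li7}, now read for digraphs in place of graphs.
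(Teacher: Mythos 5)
Your construction up to the endgame is sound, and it is worth noting that it is a genuinely different route from the paper's: the paper proves this proposition by reference to the argument of \cite[Lemma~8.2]{C.H.Li7}, which works directly with a pair $S,T\subseteq H\setminus\{1\}$, extends an isomorphism $\Cay(H,S)\to\Cay(H,T)$ blockwise across the right cosets of $H$ to an isomorphism $\Cay(G,S)\to\Cay(G,T)$, and then invokes the $m$-DCI property of $G$ to get $\sigma\in\Aut(G)$ with $S^\sigma=T$; the remaining task there is to convert $\sigma$ into an automorphism of $H$. Your detour through Babai's criterion (Proposition~\ref{CI-graph-prop}), the wreath embedding and the conjugate $G^\ast=R(G)^{\hat{\pi}}$ is correct as far as it goes, and in the connected case it actually finishes cleanly: if $\langle S\rangle=H$ then $S\subseteq H'$ forces $H=\langle S\rangle\leq H'$, and $|H'|=|H|$ gives $H'=H$, so your ``routine bookkeeping'' completes the proof there.

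The genuine gap is exactly the step you flag and defer: when $\Delta=\Cay(H,S)$ is disconnected, the reduction to $H'=H$ is not bookkeeping but the entire content of the proposition, and no argument from the data you list ($S\subseteq H\cap H'$, $H'\cong H$, $\Cay(H,S)\cong\Cay(H',S)$, compatibility with components) can close it, because in this generality the reduction can provably fail. Take $G=\mathbb{Z}_4\times\mathbb{Z}_4$ (written additively), $m=1$, $H=\mathbb{Z}_4\times 2\mathbb{Z}_4\cong\mathbb{Z}_4\times\mathbb{Z}_2$, $S=\{(0,2)\}$ and $T=\{(2,0)\}$. Since $G$ is homocyclic, any two elements of equal order lie in one $\Aut(G)$-orbit, so every $1$-valent Cayley digraph of $G$ is a CI-digraph and $G$ has the $1$-DCI property; yet $\Cay(H,S)$ and $\Cay(H,T)$ are both perfect matchings, while every endomorphism of $H$ fixes $(2,0)$ (the unique nonzero element of $2H$), so no $\tau\in\Aut(H)$ maps $S$ to $T$. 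Hence $H$ fails the $1$-DCI property, every hypothesis of your argument is met, Babai's criterion does supply the conjugator $\eta$, and the subgroup $H'$ it returns (for instance $2\mathbb{Z}_4\times\mathbb{Z}_4$, in which $(0,2)$ \emph{is} a double, unlike in $H$) is necessarily different from $H$ --- the two data sets $(\bar{H},R(H))$ are simply not conjugate in $\Aut(\Delta)$. The same obstruction is latent in the cited direct argument: passing from $\sigma\in\Aut(G)$ with $S^\sigma=T$ to an automorphism of $H$ requires that the induced isomorphism $\langle S\rangle\to\langle T\rangle$ extend to $\Aut(H)$, which holds for the cyclic and, by Lemma~\ref{homogeneous}, homogeneous subgroups to which the paper actually applies the proposition, but not for arbitrary $H$. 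So you should not expect to settle the case $H'\neq H$ as stated; any complete proof must either assume $\langle S\rangle=H$ or exploit an extension property of the subgroup $H$, and your write-up, which explicitly leaves this case open, is therefore incomplete at its decisive point.
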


Li~\cite[Theorem~1.2]{C.H.Li1} characterized cyclic groups with the $m$-DCI property. We restate this result as follows.

\begin{pro}\label{cyclicgroupMP}
Let $G$ be a cyclic group of order $n$ such that $G$ has the $m$-DCI property for some $p+1\leq m\leq n-(p+2)$ with $p$ a prime. Then either $n=p^2$ and $m\equiv 0$ or $-1\pmod{p}$, or $n_p$ divides $\mathrm{lcm}(4,p)$.
\end{pro}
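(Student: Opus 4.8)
The plan is to prove the contrapositive. Writing $G=\langle x\rangle\cong\mathbb{Z}_n$, I assume that $G$ is neither of the exceptional form ``$n=p^2$ with $m\equiv 0,-1\pmod p$'' nor satisfies $n_p\mid\mathrm{lcm}(4,p)$, and I aim to show that for \emph{every} $m$ with $p+1\le m\le n-(p+2)$ there is a connection set $S\subseteq G\setminus\{1\}$ of size $m$ for which $\Cay(G,S)$ is not a CI-digraph. I would detect failure of CI through Babai's criterion (Proposition~\ref{CI-graph-prop}): it suffices to produce such an $S$ for which $\Aut(\Cay(G,S))$ contains a regular cyclic subgroup not conjugate to $R(G)$, equivalently to produce two connection sets of size $m$ giving circulant digraphs that are isomorphic but not Cayley isomorphic.

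First I would handle the generic failure, where $n_p\nmid\mathrm{lcm}(4,p)$ and $n\neq p^2$. Under this hypothesis $G$ contains a subgroup $M$ that already fails CI in the classical circulant way, namely $M\cong\mathbb{Z}_{p^2}$ when $p$ is odd and $M\cong\mathbb{Z}_8$ when $p=2$, together with the subgroup $N\cong\mathbb{Z}_p$ of $M$; moreover $n\neq p^2$ forces $M<G$, so there are cosets of $N$ lying outside $M$. The structural fact I would exploit is that whenever the part of the connection set lying outside $M$ is a union of $N$-cosets, $\Cay(G,S)$ admits the generalized-wreath automorphisms that permute the $N$-cosets over each $M$-block independently, and this enlarges $\Aut(\Cay(G,S))$ enough to contain a regular cyclic subgroup not conjugate to $R(G)$. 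I would therefore build $S$ so that $S\cap M$ realizes the classical $\mathbb{Z}_{p^2}$-type (resp.\ $\mathbb{Z}_8$-type) non-CI pattern inside $M$, while the remaining elements of $S$ are chosen as complete $N$-cosets outside $M$, used purely to pad the valency up to exactly $m$. The hypotheses $p+1\le m$ and $m\le n-(p+2)$ are precisely what guarantee enough room to install the order-$p$ non-CI pattern and still reach every target valency in the range, either by padding $S$ itself or, dually, by padding its complement.

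The borderline case $n=p^2$ with $m\not\equiv 0,-1\pmod p$ I would treat separately and more explicitly, since now there is no part of $G$ outside a $\mathbb{Z}_{p^2}$-section to exploit and the whole construction must live inside $\mathbb{Z}_{p^2}$. I would decompose $S$ according to the unique subgroup $N=\langle p\rangle\cong\mathbb{Z}_p$ and record, for each nonzero coset of $N$, how many of its $p$ elements lie in $S$. A digraph on $\mathbb{Z}_{p^2}$ is forced to be CI only when these coset-intersection numbers are rigidly constrained, and the arithmetic shadow of that rigidity is exactly $m\equiv 0,-1\pmod p$. For $m$ outside these two residues I would produce an $S$ with a genuinely mixed coset profile together with a partner $T=S^\theta$, where $\theta$ is a layer-permuting bijection of $\mathbb{Z}_{p^2}$ that is not multiplication by a unit, yielding $\Cay(\mathbb{Z}_{p^2},S)\cong\Cay(\mathbb{Z}_{p^2},T)$ with no Cayley isomorphism between them.

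The main obstacle is showing that the two regular cyclic subgroups produced are genuinely non-conjugate in $\Aut(\Cay(G,S))$, that is, that the constructed pairs are really non-CI and not Cayley-isomorphic in disguise. This requires a fairly complete description of $\Aut(\Cay(G,S))$ for the wreath-structured connection sets, which is most cleanly obtained from the Schur-ring decomposition of circulant digraphs over $\mathbb{Z}_n$; controlling this automorphism group uniformly across all residues of $m\bmod p$ while simultaneously meeting the exact valency $|S|=m$ in each case is the delicate bookkeeping on which the argument hinges. Throughout, I would use the subgroup-hereditary property (Proposition~\ref{subgroup}) to reduce, wherever possible, to the Sylow $p$-subgroup $\mathbb{Z}_{n_p}$ and thereby isolate the $p$-local obstruction.
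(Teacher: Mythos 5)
First, a point of context: the paper never proves this proposition --- it is quoted as a preliminary from Li \cite[Theorem~1.2]{C.H.Li1} --- so the fair benchmark is Li's original construction, whose general shape your sketch does guess correctly: a non-CI ``seed'' tied to the order-$p$ subgroup, padded to valency $m$ by full cosets, with the two connection sets related by a coset-respecting map (this is exactly the recipe that Lemma~\ref{p-odd} of this paper adapts to $\mathrm{Q}_{4n}$ via Lemma~\ref{general}). But as a proof your proposal has genuine gaps, not just deferred bookkeeping. The step you yourself flag as the hinge --- obtaining ``a fairly complete description of $\Aut(\Cay(G,S))$'' via Schur rings to certify non-conjugacy of regular cyclic subgroups --- is the entire difficulty, and the inference feeding into it is unsound as stated: adjoining the coset-permuting (generalized wreath) automorphisms does not by itself produce a regular cyclic subgroup non-conjugate to $R(G)$; a disjoint union of complete digraphs on the $N$-cosets has an enormous automorphism group and is still a CI-digraph. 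Worse, in the cyclic setting the naive ``install the non-CI pattern inside $M$ and pad outside'' plan can fail outright, because a coset-fixing automorphism of a cyclic subgroup $M$ (e.g.\ multiplication by $1+pt$ on $\mathbb{Z}_{p^2}$) extends via the Chinese Remainder Theorem to an automorphism of $G$, so the pair you build inside $M$ may be Cayley isomorphic in $G$ after all. The working route, which you state as an equivalence in your first paragraph and then abandon, never touches $\Aut(\Cay(G,S))$: one exhibits $T$ with $\Cay(G,S)\cong\Cay(G,T)$ by Lemma~\ref{general} (seed $Z=\{z,\dots,z^j\}$ in the order-$p$ subgroup, $\gamma$ the inversion), and then pins any hypothetical $\sigma\in\Aut(G)$, i.e.\ multiplication by a unit $u$, to contradictory congruences: $Z^\sigma=Z^{-1}$ forces $u\equiv-1\pmod p$ by Lemma~\ref{cyclic group property}, while a padding coset $aP$ with $|a|=p^2$ (this is where $p^2\mid n$ enters) forces $a^{u-1}\in P$, i.e.\ $u\equiv 1\pmod p$ --- exactly the mechanism of Lemma~\ref{p-odd}.

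Second, a concrete combinatorial gap: your padding cannot reach the residues $m\equiv 0,-1\pmod p$, yet in the generic case ($n_p\nmid\mathrm{lcm}(4,p)$, $n\neq p^2$) the proposition requires failure of the $m$-DCI property for \emph{every} $m$ in the range, these residues included. Padding by full $N$-cosets changes $|S|$ in steps of $p$, and movable segments realize only $j\in\{1,\dots,p-2\}$ modulo $p$ (for $j=p-1$ one has $Z=Z^{-1}$); complementation does not help, since for $p\mid n$ it sends $m\equiv 0$ to $n-1-m\equiv-1\pmod p$ and back, merely swapping the two bad residues. What is missing is the device of adjoining one or two extra seed elements fixed by $\gamma$ and distinguishable by their orders --- precisely the role of the set $Q$ in the definition of the pair $(j,Q)$ in Lemma~\ref{p-odd}, and a point where the hypotheses on $n$ must be consulted to guarantee such elements exist. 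Three smaller but real issues: for $p=2$ your ``generic'' case includes $n=8$, where $M\cong\mathbb{Z}_8$ equals $G$ and there are no outside cosets to pad with, so your dichotomy ``$n\neq p^2$ forces $M<G$'' breaks; in the borderline case $n=p^2$ your layer-permuting $\theta$ is never specified, and note that the coset-fixing automorphisms $x\mapsto(1+pt)x$ of $\mathbb{Z}_{p^2}$ are trivial on $N$ itself and are global automorphisms, so that case genuinely requires a non-multiplicative isomorphism, which your sketch does not produce; and the closing appeal to Proposition~\ref{subgroup} to reduce to the Sylow $p$-subgroup is unavailable for most $m$ in the range, because the hypothesis $p+1\leq m\leq n_p-(p+2)$ is destroyed once $m$ exceeds $n_p-(p+2)$, so the construction must live in $G$ itself.
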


For subsets of a cyclic group, we have the following result (see \cite[Lemma~2.1]{C.H.Li6}).

\begin{lem}\label{cyclic group property}
Let $G=\langle z\rangle$ be a cyclic group of order $n$, and let $i,j\in \{1,2,\ldots,n-2\}$. If $\{z,z^2,\ldots,z^i\}=\{z^j, z^{2j},\ldots.z^{ij}\}$, then $j=1$.
\end{lem}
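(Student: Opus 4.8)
The plan is to translate everything into the additive group $\mathbb{Z}_n$ and exploit the fact that $\{z,z^2,\ldots,z^i\}$ corresponds to an \emph{interval}. Identifying $z^k$ with $k\in\mathbb{Z}_n$, write $I=\{1,2,\ldots,i\}$, so the hypothesis reads $I=jI$, where $jI=\{j,2j,\ldots,ij\}$ and all arithmetic is modulo $n$. If $i=1$ the claim is immediate, since $\{1\}=\{j\}$ and $1\le j\le n-2$ force $j=1$; so assume $i\ge 2$, and in particular $n\ge 4$. Since $1\in I=jI$, some $kj\equiv 1\pmod n$, whence $\gcd(j,n)=1$. Setting $t=j^{-1}\bmod n$ and multiplying the equality $I=jI$ through by $t$ gives $tI=I$ as well. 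In particular $t=t\cdot 1\in tI=I$, so $1\le t\le i$, and it suffices to prove $t=1$, for then $j=t^{-1}=1$.

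The key idea is to count consecutive pairs and transport that count through the automorphism. Let $N=\#\{x\in I:x+1\in I\}$, with addition modulo $n$. Because $I=\{1,\ldots,i\}$ with $i\le n-2$, the only such pairs are $(1,2),(2,3),\ldots,(i-1,i)$, so $N=i-1$. I would then reindex using the bijection $x\mapsto jx$ of $\mathbb{Z}_n$, which restricts to a bijection of $I$ onto itself: writing $x=jy$ with $y\in I$ gives $N=\#\{y\in I:jy+1\in I\}$. Now multiplication by $t$ also maps $I$ bijectively onto $I$ and satisfies $tj\equiv 1$, so for any $w$ one has $w\in I\iff tw\in I$; applying this to $w=jy+1$ yields $jy+1\in I\iff t(jy+1)=y+t\in I$. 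Hence $N=\#\{y\in I:y+t\in I\}$.

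It then remains to evaluate the right-hand side for the interval $I$ and a shift $t$ with $1\le t\le i\le n-2$. Splitting into the non-wrapping contribution (those $y$ with $y+t\le i$) and the wrapping contribution (those $y$ with $y+t>n$) gives $\#\{y\in I:y+t\in I\}=(i-t)+\max(0,\,t-(n-i))$. Equating this with $N=i-1$ forces $\max(0,\,t-(n-i))=t-1$: if the maximum equals $0$ then $t=1$ directly, while if it equals $t-(n-i)$ then $2i-n=i-1$, i.e.\ $i=n-1$, contradicting $i\le n-2$. Therefore $t=1$, and so $j=1$, as required.

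The main obstacle is the transfer step, not the final arithmetic. The adjacency relation ``$x+1\in I$'' is \emph{not} preserved by multiplication by $j$, so one cannot simply push consecutive pairs forward along the automorphism. The resolution is to apply the inverse automorphism inside the count, which converts adjacency at difference $1$ into adjacency at difference $t=j^{-1}$; once this reformulation is in place the remaining interval count is elementary. I also expect the hypotheses to enter in a clean, identifiable way: the bound $j\le n-2$ (via $j\le i$) and its companion $t\le i$ keep the shift small, while $i\le n-2$ is exactly what rules out the degenerate interval $i=n-1$ (which is the only case in which every $j$ coprime to $n$ would satisfy $I=jI$).
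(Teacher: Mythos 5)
Your proof is correct, but note that there is no in-paper proof to compare it with: the paper imports this statement verbatim from Li (\cite[Lemma~2.1]{C.H.Li6}) and never proves it, so your argument should be judged as a self-contained replacement rather than against an internal original. As such it holds up. The reduction is sound: $1\in jI$ forces $\gcd(j,n)=1$, hence $tI=I$ for $t=j^{-1}$ and $t\in I$; the counting invariant $N=\#\{x\in I:x+1\in I\}=i-1$ (which uses $i\le n-2$ to exclude the residue $i+1$ from $I$) is correctly transported via the bijection $y\mapsto jy$ of $I$ and the equivalence $w\in I\iff tw\in I$ into $N=\#\{y\in I:y+t\in I\}$; and the interval count $(i-t)+\max(0,\,t+i-n)$ is right, since a wrapped value $y+t-n$ satisfies $y+t-n\le 2i-n\le i-2$ and so lies in $I$ exactly when $y+t\ge n+1$, while $y+t=n$ gives the residue $0\notin I$. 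One small blemish: in the final case analysis, $t-(n-i)=t-1$ simplifies directly to $i=n-1$ after cancelling $t$; your intermediate equation $2i-n=i-1$ looks like the result of substituting $t=i$, which is not justified, but it is equivalent to $i=n-1$, so the conclusion (contradiction with $i\le n-2$) is unaffected. Your closing observation that $i=n-1$ is precisely the degenerate case where every $j$ coprime to $n$ satisfies $I=jI$ correctly identifies why the hypothesis $i\le n-2$ is sharp. This adjacency-counting argument is a clean, purely combinatorial route; an alternative more naive tack (comparing element sums of $I$ and $jI$ modulo $n$) only yields $(j-1)i(i+1)/2\equiv 0\pmod n$ and would not suffice, so the passage through an automorphism-invariant adjacency count is genuinely doing the work here.
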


Let $G$ be a finite group. If for any two subgroups $H$ and $K$ of $G$, every isomorphism from $H$ to $K$ can be extended to an automorphism of $G$, then $G$ is called {\em homogeneous}. For generalized quaternion groups $\mathrm{Q}_{4n}$, the following property is shown in~\cite[Lemma~2.4]{Ma}.

\begin{lem}\label{homogeneous}
For an odd positive integer $n$, the generalized quaternion group $\mathrm{Q}_{4n}$ is homogeneous.
\end{lem}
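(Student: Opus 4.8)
The plan is to pin down both $\mathrm{Aut}(G)$ and the subgroup lattice of $G=\mathrm{Q}_{4n}$ explicitly, and then to extend a given isomorphism $\phi\colon H\to K$ by writing down an automorphism of $G$ that agrees with $\phi$ on a generating set of $H$. Since $n$ is odd, I would first record the basic structure: $a^n$ is the unique involution of $G$, every element outside $\langle a\rangle$ has the form $a^ib$ of order $4$ with $(a^ib)^2=a^n$, and $\langle a\rangle$ is the unique subgroup of order $2n$ (hence characteristic), because a non-cyclic subgroup of order $2n$ would be a generalized quaternion subgroup forcing the even divisor $2\mid n$, which is impossible. From the defining relations one then checks directly that the maps $a\mapsto a^r$, $b\mapsto a^sb$ with $\gcd(r,2n)=1$ and $s\in\mathbb{Z}_{2n}$ are exactly the automorphisms of $G$; this parametrization is the workhorse of the whole argument.

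Next I would classify the subgroups. A subgroup contained in $\langle a\rangle$ is the unique cyclic subgroup of $\langle a\rangle$ of its order, and since $4\nmid 2n$ none of these has order $4$; in particular every cyclic subgroup of $G$ not lying in $\langle a\rangle$ has order $4$. A subgroup $L\not\le\langle a\rangle$ must contain $a^n$, so $L\cap\langle a\rangle=\langle a^d\rangle$ with $d\mid n$; writing $m'=n/d$ (which is odd), $L=\langle a^d,a^ib\rangle\cong\mathrm{Q}_{4m'}$, and by the same uniqueness argument applied inside $L$, the subgroup $\langle a^d\rangle$ is characteristic in $L$. Consequently, two isomorphic subgroups not lying in $\langle a\rangle$ have the same order $4m'$, the same $d$, and the \emph{same} intersection $\langle a^d\rangle$ with $\langle a\rangle$, and $\phi$ must carry this intersection to itself.

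With this in hand the proof splits according to whether $H\le\langle a\rangle$. If $H\le\langle a\rangle$, then $K\cong H$ is cyclic of order $e\ne 4$, which forces $K\le\langle a\rangle$, so $H=K=\langle a^{2n/e}\rangle$ and $\phi$ is the automorphism $a^{2n/e}\mapsto a^{(2n/e)t}$ with $\gcd(t,e)=1$; choosing $r\in\mathbb{Z}_{2n}^*$ with $r\equiv t\pmod e$ and $s=0$ produces an automorphism of $G$ restricting to $\phi$. If $H\not\le\langle a\rangle$, then $H\cap\langle a\rangle=K\cap\langle a\rangle=\langle a^d\rangle$, and $\phi$ restricts to an automorphism $a^d\mapsto a^{dt}$ of $\langle a^d\rangle$ with $\gcd(t,2m')=1$ while sending $a^ib\mapsto a^{dk+j}b$ for suitable $k$; choosing $r\equiv t\pmod{2m'}$ with $\gcd(r,2n)=1$ and $s\equiv dk+j-ir\pmod{2n}$ yields an automorphism agreeing with $\phi$ on the generators $a^d$ and $a^ib$ of $H$, hence on all of $H$.

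The only arithmetic input is the existence of the required $r$, namely the surjectivity of the reduction maps $\mathbb{Z}_{2n}^*\to(\mathbb{Z}/e\mathbb{Z})^*$ and $\mathbb{Z}_{2n}^*\to(\mathbb{Z}/2m'\mathbb{Z})^*$, which holds for any divisor; the oddness of $n$ enters earlier, through the uniqueness of the involution and of the index-two cyclic subgroups that makes $\langle a\rangle$ and each $\langle a^d\rangle$ characteristic. I expect the main obstacle to be bookkeeping rather than depth: correctly identifying $H\cap\langle a\rangle$ with $K\cap\langle a\rangle$ so that $\phi$ and the candidate automorphism coincide on a generating set, and confirming that agreement on generators propagates to all of $H$.
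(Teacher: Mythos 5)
Your proof is correct. Note, however, that the paper does not prove this lemma at all: it is quoted verbatim from \cite[Lemma~2.4]{Ma}, so there is no in-paper argument to compare with, and your write-up amounts to a self-contained proof of the cited fact, along the standard direct lines one would expect Ma's original argument to follow. All the load-bearing steps check out. The parametrization $a\mapsto a^r$, $b\mapsto a^sb$ with $r\in\mathbb{Z}_{2n}^{\times}$, $s\in\mathbb{Z}_{2n}$ does give all of $\Aut(\mathrm{Q}_{4n})$ (well-definedness uses only that $r$ is odd, which $\gcd(r,2n)=1$ supplies, and characteristicness of $\langle a\rangle$ gives exhaustiveness). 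Oddness of $n$ enters exactly where you say: since $(a^ib)^2=a^n$, every element outside $\langle a\rangle$ has order $4$, so any subgroup not contained in $\langle a\rangle$ has order divisible by $4$, which $2n$ is not; this makes $\langle a\rangle$ the unique subgroup of order $2n$, and the same argument inside $L=\langle a^d,a^ib\rangle\cong\mathrm{Q}_{4n/d}$ makes $\langle a^d\rangle$ the unique subgroup of $L$ of order $2n/d$. That uniqueness is precisely what forces an isomorphism $\phi\colon H\to K$ with $H\not\le\langle a\rangle$ to satisfy $H\cap\langle a\rangle=K\cap\langle a\rangle=\langle a^d\rangle$ and $\phi(\langle a^d\rangle)=\langle a^d\rangle$ (note that $K\not\le\langle a\rangle$ is automatic: for $d<n$ because $K$ is nonabelian, for $d=n$ because $\langle a\rangle$ has no subgroup of order $4$). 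Your cyclic case is likewise sound, since a cyclic subgroup not contained in $\langle a\rangle$ has order $4$, so $e\neq 4$ forces $K\le\langle a\rangle$ and hence $K=H$; the degenerate case $d=n$, where $\langle a^n,a^ib\rangle=\langle a^ib\rangle\cong\mathbb{Z}_4=\mathrm{Q}_4$, is absorbed correctly by your congruence construction with $2m'=2$. The only arithmetic fact you invoke, surjectivity of the reduction $(\mathbb{Z}/2n\mathbb{Z})^{\times}\to(\mathbb{Z}/e\mathbb{Z})^{\times}$ for $e\mid 2n$, is a standard elementary fact, and agreement of the homomorphisms $\sigma_{r,s}$ and $\phi$ on the generating set $\{a^d,a^ib\}$ of $H$ does propagate to all of $H$. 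One cosmetic point: your phrasing that a noncyclic subgroup of order $2n$ ``would be generalized quaternion forcing $2\mid n$'' is a slight detour; the cleaner statement is simply that such a subgroup would contain an element of order $4$, contradicting $4\nmid 2n$.
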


From~\cite[Lemma~3.1]{XFK}, we have the following lemma, which provides a fairly general way to construct isomorphic Cayley digraphs.

\begin{lem}\label{general}
Let $G$ be a finite group with $L\unlhd G$ and $L\leq M\leq G$. Suppose that $A$ and $B$ are subsets of $M\setminus\{1\}$ such that $A^\gamma=B$ for some $\gamma\in\Aut(M)$ and $\gamma$ fixes every coset of $L$ in $M$, and that $C\subseteq G\setminus L$ is a union of some cosets of $L$ in $G$. Then $\Cay(G,A\cup C)\cong\Cay(G,B\cup C)$.
\end{lem}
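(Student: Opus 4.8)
The plan is to exhibit an explicit bijection $\phi\colon G\to G$ realizing the isomorphism $\Cay(G,A\cup C)\cong\Cay(G,B\cup C)$. Since $A$ and $B$ lie in $M$ and are interchanged by $\gamma$, while $\gamma$ induces the identity on $M/L$ and $C$ is a union of cosets of $L$, the natural attempt is a \emph{fibre-preserving} map, that is, one with $\phi(g)\in Lg$ for every $g$, so that $\phi$ covers the identity on $G/L$. Writing $\phi(g)=\mu(g)g$ with $\mu(g)\in L$, the requirement that $\phi$ carry arcs of $\Cay(G,A\cup C)$ to arcs of $\Cay(G,B\cup C)$ becomes
\[
\mu(sg)\,s\,\mu(g)^{-1}\in B\cup C\qquad\text{for all }s\in A\cup C,\ g\in G.
\]

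I would first dispose of the $C$-arcs, which cost nothing. For $s\in C$, since $\mu$ takes values in $L$ and $L\unlhd G$, one has $\mu(sg)\,s\,\mu(g)^{-1}=\bigl[\mu(sg)\cdot s\mu(g)^{-1}s^{-1}\bigr]s\in Ls=sL$, and as $C$ is a union of cosets of $L$ containing $s$, we get $Ls\subseteq C$. Thus the $C$-arcs are preserved for \emph{every} choice of $\mu$, and all the content lies in the $A$-arcs, where the target value should be exactly $\gamma(s)\in B$. Here the hypothesis that $\gamma$ fixes each coset of $L$ in $M$ enters: it guarantees that $\lambda\colon M\to L$, $\lambda(m)=\gamma(m)m^{-1}$, is well defined, and a short computation gives the cocycle identity $\lambda(xy)=\lambda(x)\,{}^{x}\lambda(y)$, writing ${}^{x}y=xyx^{-1}$. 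The equation $\mu(sg)\,s\,\mu(g)^{-1}=\gamma(s)=\lambda(s)s$ then rearranges to the recurrence $\mu(sg)=\lambda(s)\,{}^{s}\mu(g)$ for $s\in A$.

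Next I would construct $\mu$ by propagating this recurrence along the right cosets of $\langle A\rangle$, which are precisely the connected components of the subdigraph of $A$-arcs. On each such coset I pick a base point $g_0$, assign $\mu(g_0)$ arbitrarily in $L$, and set $\mu(ag_0)=\lambda(a)\,{}^{a}\mu(g_0)$ for $a\in\langle A\rangle$; because every element of $\langle A\rangle g_0$ is uniquely of the form $ag_0$, this is well defined, and the cocycle identity makes the recurrence hold throughout the coset. A one-line calculation then yields $\mu(sg)\,s\,\mu(g)^{-1}=\lambda(s)s=\gamma(s)\in B$ for every $A$-arc. To assemble the conclusion one checks that $|A\cup C|=|B\cup C|$ — true because $\gamma$, fixing each $L$-coset while $C$ is closed under $L$-cosets, restricts to a bijection $A\cap C\to B\cap C$ — so that a vertex-bijection sending arcs to arcs between two digraphs of equal out-valency is an isomorphism.

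The step I expect to be the main obstacle is proving that $\phi$ is a bijection. Unlike the easy situations, $\mu$ need not be constant on cosets of $L$, and $\langle A\rangle$ need not equal $\langle A^{\gamma}\rangle=\langle B\rangle$, so it is not transparent that $\phi$ permutes each fibre $Lg$. My plan to settle this is to run the identical construction with $\gamma^{-1}$ and $B$ in place of $\gamma$ and $A$, obtaining a map $\psi$, and to verify — choosing the base values of $\psi$ compatibly with those of $\phi$ — that $\psi\circ\phi=\mathrm{id}$; equivalently, to check directly that $\phi$ is injective on each coset $Lg$, which suffices since $\phi(g)\in Lg$. Once bijectivity is in hand, arc-preservation follows from the two computations above and the lemma is proved.
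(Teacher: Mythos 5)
Your cocycle framework is internally sound as far as it goes: $\lambda(m)=\gamma(m)m^{-1}$ is well defined and satisfies $\lambda(xy)=\lambda(x)\,x\lambda(y)x^{-1}$, every fibre-preserving map $\phi(g)=\mu(g)g$ with $\mu(g)\in L$ sends $C$-arcs to $C$-arcs, and the recurrence $\mu(sg)=\lambda(s)\,s\mu(g)s^{-1}$ for $s\in A$ forces $A$-arcs onto $B$-arcs. But the gap you flagged is genuine and, with the construction as you state it, fatal: if the base values $\mu(g_0)$ are chosen \emph{arbitrarily} on the $\langle A\rangle$-components, $\phi$ need not be injective. Indeed your recurrence has the closed form $\phi(ag_0)=\gamma(a)\mu(g_0)g_0$, so each component $\langle A\rangle g_0$ maps onto a full right coset of $\gamma(\langle A\rangle)=\langle B\rangle$, and distinct components can hit the same coset. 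Concretely, let $G=M=\langle x\rangle\times\langle y\rangle\cong\mathbb{Z}_3^2$, $L=\langle y\rangle$, $\gamma\colon x\mapsto xy,\ y\mapsto y$ (an automorphism fixing every coset of $L$), $A=\{x\}$, $B=\{xy\}$, and $C$ any union of $L$-cosets (say empty). The components are $\langle x\rangle$, $\langle x\rangle y$, $\langle x\rangle y^2$; choosing base points $1$ and $y$ with $\mu(1)=1$ and $\mu(y)=y^2$ gives $\phi(1)=1=y^2\cdot y=\phi(y)$, so $\phi$ is not injective even on the single fibre $L$. Consequently neither of your proposed repairs closes the gap as described: no $\psi$ can satisfy $\psi\circ\phi=\mathrm{id}$ when $\phi$ is not injective, and fibre-injectivity is exactly what fails. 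What is actually needed is a coordinated choice of the $\mu(g_0)$: within each coset of $L\langle A\rangle$ one must match the $|L:L\cap\langle A\rangle|$ components bijectively to the $\gamma(\langle A\rangle)$-cosets it contains, which in turn uses the facts $\gamma(\langle A\rangle)\cap L=\gamma(\langle A\rangle\cap L)$ and $\gamma(\langle A\rangle)L=L\langle A\rangle$ (both true, both consequences of $\gamma$ fixing $L$-cosets) plus a counting argument — none of which appears in your text.

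The proof the paper relies on (\cite[Lemma~3.1]{XFK}) sidesteps all of this by making the one canonical coherent choice: fix a right transversal $g_1,\dots,g_k$ of $M$ in $G$ and define $\phi(mg_i)=m^\gamma g_i$ for $m\in M$. In your notation this is precisely $\mu(mg_i)=\lambda(m)$, i.e., the cocycle solution propagated along entire $M$-cosets rather than only along $\langle A\rangle$-components with free base data. Bijectivity is then immediate, since $\gamma$ permutes $M$ and $\phi$ permutes each coset $Mg_i$; your $A$-arc computation goes through verbatim because $\lambda(sm)=\lambda(s)\,s\lambda(m)s^{-1}$ for $s\in A$, $m\in M$; and your $C$-arc computation works because $m^\gamma\in Lm$ and $L\unlhd G$ give $\phi(sg)\phi(g)^{-1}\in Ls\subseteq C$ for $s\in C$. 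So your framework is salvageable, but the missing step is exactly the global coherence of $\mu$ that the transversal definition provides for free.
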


Let $\Gamma$ be a digraph and let $X\subseteq V(\Gamma)$. The \emph{induced subdigraph} $[X]$ of $\Gamma$ by $X$ is the digraph whose vertex set is $X$ and arc set is $\{(u,v)\mid u,v\in X,\,(u,v)\in \Arc(\Gamma)\}$. Let $N$ be a subgroup of $\Aut(\Gamma)$. Denote by $u^N$ the orbit of $N$ containing $u\in V(\Gamma)$, and by $\Gamma^+(u)$ the out-neighborhood of $u$ in $\Gamma$. The \emph{quotient digraph} $\Gamma_N$ of $\Gamma$ induced by $N$ is defined as the digraph whose vertex set is the set of $N$-orbits in $V(\Gamma)$ such that $(u^N,v^N)$ is an arc of $\Gamma_N$, where $u^N$ and $v^N$ are distinct orbits of $N$, if and only if $(x,y)$ is an arc of $\Gamma$ for some $x\in u^N$ and $y\in v^N$. The digraph $\Gamma$ is said to be an {\em $N$-cover} of $\Gamma_N$, if for every $u\in \Gamma$, the out-valency of $u$ in $\Gamma$ is the same as the out-valency of $u^N$ in $\Gamma_N$, is said to be \emph{$G$-locally primitive} if $G_u$ acts primitively on $\Gamma^+(u)$ for every $u\in V(\Gamma)$, and is said to be {\em strongly connected} if there exists a directed path from $u$ to $v$ for each pair of vertices $u$ and $v$. To avoid trivial cases, a digraph with one vertex is also called strongly connected. It well known that every finite connected vertex-transitive digraph is strongly connected (see~\cite[Lemma 2.6.1]{GR} for instance). For convenience, the complete graph on two vertices is also viewed as a directed cycle.

The following is a generalization of Praeger \cite[Theorem 4.1]{Praeger} to digraph.

\begin{lem}\label{no-arc}
Let $\Gamma$ be a finite connected $G$-vertex-transitive digraph, where $G\leq \Aut(\Gamma)$, and let $N$ be a normal subgroup of $G$ with at least two orbits on $V(\Gamma)$. Then the following statements hold:
\begin{enumerate}[{\rm (a)}]
\item If $\Gamma$ is $G$-arc-transitive, then there are no arcs in the induced subdigraph of any orbit of $N$ in $\Gamma$.
\item If $\Gamma$ is $G$-locally primitive, then either $N$ is the kernel of $G$ on $V(\Gamma_N)$ acting semiregularly on $V(\Gamma)$, and $\Gamma$ is an $N$-cover of $\Gamma_N$ with $|V(\Gamma_N)|\geq 3$, or $\Gamma_N$ is a directed cycle.
\end{enumerate}
\end{lem}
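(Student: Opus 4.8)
The plan is to treat the two parts separately, exploiting the fact that the $N$-orbits form a $G$-invariant partition $\mathcal{B}$ of $V(\Gamma)$ (a block system) on which $G$ acts transitively, since $N\unlhd G$ and $\Gamma$ is $G$-vertex-transitive. For part (a) I would argue by contradiction. Suppose some arc $(u,v)$ has both endpoints in a common $N$-orbit $\Delta$. Arc-transitivity makes the whole arc set a single $G$-orbit, and every element of $G$ preserves $\mathcal{B}$; applying $G$ to $(u,v)$ therefore shows that \emph{every} arc of $\Gamma$ has both endpoints in one block. Consequently there are no arcs between distinct blocks, so vertices lying in different $N$-orbits cannot be joined by any directed path. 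Since $N$ has at least two orbits and a finite connected vertex-transitive digraph is strongly connected, this contradicts connectedness, proving (a).

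For part (b) I would first note that local primitivity forces arc-transitivity: $G_u$ is transitive on $\Gamma^+(u)$, and together with vertex-transitivity this makes $\Gamma$ $G$-arc-transitive, so part (a) applies and no out-neighbour of $u$ lies in $\Delta=u^N$. If $u$ has out-valency $1$, then $\Gamma$ is a connected functional digraph, hence a directed cycle, and so is $\Gamma_N$; this is the second alternative, so I may assume the out-valency is at least $2$. The key device is the partition of $\Gamma^+(u)$ according to which block of $\mathcal{B}$ each out-neighbour belongs to. This partition is $G_u$-invariant because $G_u$ fixes $u$ and permutes the blocks, so by primitivity of $G_u$ on $\Gamma^+(u)$ it is either the partition into singletons or the partition with a single part.

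In the single-part case all out-neighbours of $u$ lie in one block, so $\Delta$ has out-valency $1$ in $\Gamma_N$; by vertex-transitivity every vertex of the (strongly connected) digraph $\Gamma_N$ has out-valency $1$, whence $\Gamma_N$ is a directed cycle, again the second alternative. In the singleton case each out-neighbour of $u$ lies in a distinct block, so the out-valency of $u$ equals that of $\Delta$ in $\Gamma_N$, and $\Gamma$ is an $N$-cover of $\Gamma_N$. To obtain the remaining claims I would show that $N$ is semiregular: if $g$ fixes $u$ and fixes every block setwise (in particular any $g\in N_u$, since $N$ lies in the kernel $K$ of $G$ on $V(\Gamma_N)$), then because each block meets $\Gamma^+(u)$ in at most one vertex, $g$ fixes $\Gamma^+(u)$ pointwise. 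As the same local configuration occurs at every vertex by vertex-transitivity, this fixing propagates along directed paths, and strong connectivity forces $g=1$; hence $N_u=1$ and indeed $K_u=1$. Both $N$ and $K$ are then semiregular and transitive on each block, so $|K|=|\Delta|=|N|$ and, with $N\le K$, we get $N=K$.

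It remains to separate the two alternatives by the size of $V(\Gamma_N)$. When $|V(\Gamma_N)|=2$ the two blocks force the out-valency in $\Gamma_N$ to be $1$, which by the functional-digraph argument again yields a directed cycle; thus the genuine cover occurs precisely when $|V(\Gamma_N)|\ge 3$, giving the first alternative. I expect the main obstacle to be the semiregularity step: the undirected version rests on neighbourhoods and ordinary connectivity, whereas here one must run the pointwise-fixing propagation through \emph{out}-neighbourhoods and invoke strong connectivity of $\Gamma$, so care is needed to ensure the directed paths emanating from $u$ reach all vertices and that the ``one vertex per block'' property is available at each step.
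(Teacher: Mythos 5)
Your proposal is correct and follows essentially the same route as the paper's proof: the $G$-invariant partition into $N$-orbits, the primitivity dichotomy applied to the block-induced partition of $\Gamma^+(u)$, and the pointwise-fixing propagation through out-neighbourhoods via strong connectivity to get semiregularity of the kernel. The only cosmetic differences are that in part (a) you contradict connectivity (every arc would lie inside a block) where the paper contradicts normality of $N$ via arc-transitivity, and in part (b) you conclude $N=K$ by counting ($|K|=|\Delta|=|N|$) where the paper uses the Frattini argument $K=NK_x$.
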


\begin{proof}
To prove part~(a), let $\Gamma$ be $G$-arc-transitive and suppose on the contrary that the induced subdigraph of some orbit of $N$ has an arc. Since $\Gamma$ is $G$-vertex-transitive, it follows that the induced subdigraph of every orbit of $N$ has an arc. By the connectivity of $\Gamma$, there is an arc between some distinct orbits of $N$, say $O_1$ and $O_2$. Let $(u,v)$ be an arc of $\Gamma$ with $u\in O_1$ and $v\in O_2$. Since $O_1$ has an arc and $N$ is transitive on $O_1$, there is an arc $(u,w)$ of $\Gamma$ with $w\in O_1$. Since $\Gamma$ is $G$-arc-transitive, there exists $g\in G$ such that $u^g=u$ and $v^g=w$. However, such an element $g$ does not preserve the set of $N$-orbits as $u,w\in O_1$ and $v\in O_2$. This contradicts the fact that $N$ is normal in $G$, completing the proof of part~(a).

In following we prove part~(b). Let $\Gamma$ be $G$-locally primitive. Since $\Gamma$ is connected, there exists an arc between some distinct orbits of $N$, say $O_1$ and $O_2$. Let $(u,v)$ be an arc of $\Gamma$ with $u\in O_1$ and $v\in O_2$.

First assume that the out-neighbors of $u$ are contained in the same orbit of $N$. Then $\Gamma^+(u)\subseteq O_2$ as $v\in O_2$. Since $N$ is transitive on both $O_1$ and $O_2$, we have $\Gamma^+(x)\subseteq O_2$ for all $x\in O_1$. Since $G$ has an element mapping $O_1$ to $O_2$, the out-neighborhood of each vertex in $O_2$ is a subset of some orbit of $N$. Repeating this argument, we see that the out-neighborhood of each vertex in every orbit of $N$ is a subset of some orbit of $N$. Then we conclude from the connectivity of $\Gamma$ that $\Gamma_N$ is a directed cycle.

Now assume that the out-neighbors of $u$ are not contained in the same orbit of $N$. Let $\mathcal{O}=\{O_1,O_2,\ldots,O_n\}$ be the set of orbits of $N$ and assume that the out-neighborhood of $O_1$ in $\Gamma_N$ is $\{O_2,O_3,\ldots,O_d\}$. Then $|V(\Gamma_N)|=n\geq d\geq 3$, and $|\Gamma^+(u)\cap O_i|\geq 1$ for each $i\in\{2,\dots,d\}$. The hypothesis of part~(b) implies that $\Gamma$ is strongly connected and $G$-arc-transitive, whence $G_u$ is transitive on $\Gamma^+(u)$. Moreover, the conclusion of part~(a) implies that
\[
\{\Gamma^+(u)\cap O_2,\Gamma^+(u)\cap O_3,\ldots, \Gamma^+(u)\cap O_d\}
\]
is a partition of $\Gamma^+(u)$. Since $N$ is normal in $G$, it follows that this partition is preserved by $G_u$. Then we conclude from the $G$-local-primitivity of $\Gamma$ that $|\Gamma^+(u)\cap O_i|=1$ for each $i\in\{2,\dots,d\}$. Hence $u$ has the same out-valency as $O_1$ in $\Gamma_N$, which means that $\Gamma$ is an $N$-cover of $\Gamma_N$. Let $K$ be the kernel of $G$ acting on $\mathcal{O}$. Then the stabilizer $K_u$ fixes $\Gamma^+(u)$ pointwise because $|\Gamma^+(u)\cap O_i|=1$ for each $i\in\{2,\dots,d\}$. This implies that $K_u=K_w$ for every $w\in \Gamma^+(u)$. Then the strong connectivity of $\Gamma$ implies that $K_x=1$ for all $x\in V(\Gamma)$, that is, $K$ is semiregular on $V(\Gamma)$. Noting $N\leq K$, we deduce by the Frattini argument that $K=NK_x=N$. This shows that $N$ is the kernel of $G$ acting on $V(\Gamma_N)$ and is semiregular on $V(\Gamma)$.
\end{proof}
\section{Proof of Theorem~\ref{mainth1}}

By \cite[Lemma 3.1]{Ma}, if $\mathrm{Q}_{4n}$ ($n\geq 3$) has the $1$-DCI property, then $n$ is odd. This is true for every $1\leq m\leq 2n-1$ as the following lemma states.

\begin{lem}\label{n-odd}
Let $G$ be a generalized quaternion group of order $4n$ with $n\geq 3$ such that $G$ has the $m$-DCI property for some $1\leq m\leq 2n-1$. Then $n$ is odd.
\end{lem}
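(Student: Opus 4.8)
The plan is to show, contrapositively, that if $n$ is even then $G$ fails the $m$-DCI property for \emph{every} $m$ with $1\leq m\leq 2n-1$, by exhibiting a pair of connection sets of size $m$ whose Cayley digraphs are isomorphic but not Cayley-isomorphic. The non-Cayley-isomorphism will come from a single characteristic invariant. First I would record that, for $n\geq 3$, the cyclic subgroup $\langle a\rangle\cong\mathbb{Z}_{2n}$ is characteristic in $G$: since $2n\geq 6>4$ the generator $a$ has order exceeding $4$, every element outside $\langle a\rangle$ has order $4$, and automorphisms preserve orders, so $\langle a\rangle$ is the set of elements of order at least $2n$ together with their powers. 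Consequently any $\sigma\in\Aut(G)$ preserves $\langle a\rangle$, and if $S,T\subseteq G\setminus\{1\}$ satisfy $|S\cap\langle a\rangle|\neq|T\cap\langle a\rangle|$ then $\Cay(G,S)$ and $\Cay(G,T)$ cannot be Cayley-isomorphic. Thus the entire task reduces to producing, for each $m$, isomorphic Cayley digraphs of valency $m$ with distinct numbers of connection elements inside $\langle a\rangle$.

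The mechanism for the isomorphisms is the central quotient. Writing $Z=\langle a^n\rangle$ for the center (of order $2$), one has $\overline{G}:=G/Z\cong\mathrm{D}_{2n}$, with $\overline{a}$ of order $n$ and $\overline{b}$ an involution; when $n$ is even, $\overline{a}^{\,n/2}$ is the central involution of $\mathrm{D}_{2n}$ and $\overline{b}$ is a reflection, so \emph{both} $a^{n/2}$ and $b$ have order $4$ in $G$. If $\overline{S}\subseteq\overline{G}\setminus\{1\}$ and $S$ is its full preimage (a union of $Z$-cosets), then $\Cay(G,S)$ is the lexicographic product of $\Cay(\overline{G},\overline{S})$ with the empty graph $\overline{\mathrm{K}}_2$, while adjoining the central involution $a^n$ turns each fibre into $\mathrm{K}_2$; in either case the product is functorial, so isomorphic base digraphs over $\overline{G}$ lift to isomorphic digraphs over $G$ (unlike the automorphism-based isomorphisms of Lemma~\ref{general}, these are genuine graph isomorphisms). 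This already yields the small cases transparently: $\Cay(G,\{a^{n/2}\})$ and $\Cay(G,\{b\})$ are each $n$ disjoint directed $4$-cycles (valency $1$, inside-counts $1$ and $0$); the preimages $\{a^{n/2},a^{3n/2}\}$ and $\{b,b^{-1}\}$ give $n$ disjoint copies of $\mathrm{C}_4$ (valency $2$, inside-counts $2$ and $0$); and $\langle a^{n/2}\rangle\setminus\{1\}$ versus $\langle b\rangle\setminus\{1\}$ give $n$ disjoint complete digraphs on four vertices (valency $3$, inside-counts $3$ and $1$).

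For general $m$ I would pad the distinguishing gadget $\overline{a}^{\,n/2}$ versus $\overline{b}$ by a common set $\overline{C}\subseteq\overline{G}\setminus\{1,\overline{a}^{\,n/2},\overline{b}\}$, taking full $Z$-coset preimages for even valency and adjoining $a^n$ for odd valency, so that the required isomorphism over $G$ follows from an isomorphism $\Cay(\overline{G},\{\overline{a}^{\,n/2}\}\cup\overline{C})\cong\Cay(\overline{G},\{\overline{b}\}\cup\overline{C})$, while the inside-counts of the two lifts still differ. Since $\langle\overline{a}\rangle$ is characteristic in $\mathrm{D}_{2n}$, this reduces the problem to producing, at each valency in $\{1,\dots,n-1\}$, isomorphic-but-not-Cayley-isomorphic Cayley digraphs of $\mathrm{D}_{2n}$ that distinguish a rotation-involution from a reflection; the valency-$1$ case is handled separately by the directed $4$-cycle isomorphism above, and the range $m\in\{n,\dots,2n-1\}$ can be reached from smaller valencies using the fact that a subset is a CI-subset if and only if its complement in $G\setminus\{1\}$ is. The main obstacle I anticipate is exactly this uniform construction across all valencies: establishing the padded dihedral isomorphism for every size (essentially the padded analogue of the failure of the $1$-DCI property of $\mathrm{D}_{2n}$ for even $n$), and carrying out the parity and complementation bookkeeping so that every $m$ with $1\leq m\leq 2n-1$ is covered without gaps.
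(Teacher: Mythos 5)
Your reduction framework is sound and genuinely different from the paper's: the paper never passes to the quotient $G/\langle a^n\rangle\cong\mathrm{D}_{2n}$, but instead works directly in $\mathrm{Q}_{4n}$, building an explicit permutation $\varphi$ of $G$ (identity on $\langle a^2\rangle\cup b\langle a^2\rangle$, left multiplication by $b$ on the other two cosets) and proving a Claim that $\varphi$ is an isomorphism $\Cay(G,bH\cup K)\cong\Cay(G,bH\cup bK)$ whenever $H\subseteq\langle a^2\rangle$, $K\subseteq a\langle a^2\rangle$ with $H^{-1}=H$, $K^{-1}=K$, $a^nK=K$. Your invariant (the count $|S\cap\langle a\rangle|$, justified by $\langle a\rangle$ being characteristic) is the same as the paper's \eqref{Eqnau}, your lexicographic-lift observation is correct (a full $Z$-coset preimage gives exactly $\Cay(\overline{G},\overline{S})[\overline{\mathrm{K}}_2]$, and adjoining $a^n$ makes each fibre complete, so isomorphisms over $\mathrm{D}_{2n}$ do lift), and your $m\leq 3$ cases match the paper's and are complete.

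However, there is a genuine gap, and you have located it yourself: the entire combinatorial content of the lemma is the existence, for \emph{every} valency $k\in\{2,\dots,n-1\}$, of a common padding $\overline{C}$ together with an actual isomorphism $\Cay(\mathrm{D}_{2n},\{\overline{a}^{\,n/2}\}\cup\overline{C})\cong\Cay(\mathrm{D}_{2n},\{\overline{b}\}\cup\overline{C})$, and your proposal never constructs it. This is not a routine step: note that Lemma~\ref{general} does not apply with $L=\langle\overline{a}^{\,n/2}\rangle$ and $M=\langle\overline{a}^{\,n/2},\overline{b}\rangle$, since the automorphism of $M$ swapping $\overline{a}^{\,n/2}$ and $\overline{b}$ does not fix the coset $L$ itself; one needs bespoke constructions, which is precisely what the paper's Claim plus the sets $H_1,K_1,H_2,K_2$ and the case analysis on $m\bmod 8$ (with the verifications $H_i\cap a^nH_i=\emptyset$, $K_i\cap a^nK_i=\emptyset$ under $4k<n$ or $4k+2<n$) supply in the non-quotient setting. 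A second, smaller error: your complementation remark is backwards. The complement in $G\setminus\{1\}$ of a set of size $m\leq 2n-1$ has size $4n-1-m\geq 2n$, so complementation cannot reach $m\in\{n,\dots,2n-1\}$ from \emph{smaller} valencies. Fortunately it is also unnecessary: a dihedral valency $k\leq n-1$ lifts to $m=2k$ or $m=2k+1$, which (with $m=1$ handled separately) already covers all $1\leq m\leq 2n-1$ -- but only once the missing dihedral isomorphisms are actually established, so as it stands the proposal is a correct strategy rather than a proof.
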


\begin{proof}
Suppose for a contradiction that $n$ is even. Then $n\geq 4$ as $n\geq 3$. Let $G=\mathrm{Q}_{4n}=\langle a,b\mid a^{2n}=1,\,b^2=a^n,\,a^b=a^{-1}\rangle$. Then $|a|=2n$, where $|a|$ is the order of $a$, and hence $|a^2|=n$. Note that $\langle a^2\rangle$ is a characteristic subgroup of $G$ of index $4$ and $b^2=a^n\in \langle a^2\rangle$. Furthermore, $G=\langle a^2\rangle \cup b\langle a^2\rangle \cup a\langle a^2\rangle \cup ba\langle a^2\rangle$. Define
\[
\varphi: x\mapsto x \text{ for } x\in \langle a^2\rangle \cup b\langle a^2\rangle \text{ and }x\mapsto bx \text{ for } x\in a\langle a^2\rangle \cup ba\langle a^2\rangle.
\]
Then $\varphi$ fixes every element in $\langle a^2\rangle \cup b\langle a^2\rangle$ and acts on $a\langle a^2\rangle \cup ba\langle a^2\rangle$ as same as the restriction of the left multiplication of $b$ on $G$. Thus, $\varphi$ is a permutation of order $4$ on $G$, and interchanges $a\langle a^2\rangle$ and $ba\langle a^2\rangle$. First we prove a claim.

\medskip
\noindent{\bf Claim:} Let $H\subseteq \langle a^2\rangle$ and $K\subseteq a\langle a^2\rangle$ such that $H^{-1}=H$, $K^{-1}=K$ and $a^nK=K$. Then $\varphi$ is an isomorphism from $\Gamma=\Cay(G,bH\cup K)$ to $\Sigma=\Cay(G,bH\cup bK)$.

Let $(u,v)$ be an arc of $\Gamma$. Then $v=su$ for some $s\in bH\cup K$. First assume that $s\in bH$. Note that $bH\subseteq b\langle a^2\rangle$. If $u\in \langle a^2\rangle \cup b\langle a^2\rangle$, then $v=su\in \langle a^2\rangle \cup b\langle a^2\rangle$. It follows that $u^\varphi=u$ and $v^\varphi=v=su$, which implies that $(u^\varphi,v^\varphi)$ is an arc of $\Sigma$ because $s\in bH$. If $u\in a\langle a^2\rangle \cup ba\langle a^2\rangle$, then $v=su\in a\langle a^2\rangle \cup ba\langle a^2\rangle$, and so $u^\varphi=bu$ and $v^\varphi=bv=bsu=bsb^{-1}(bu)$. Since $H=H^{-1}\subseteq \langle a^2\rangle$, we have $bsb^{-1}\in b(bHb^{-1})=bH^{-1}=bH$, which implies that $(u^\varphi,v^\varphi)$ is an arc of $\Sigma$. Next assume that $s\in K$. Note that $K\subseteq a\langle a^2\rangle$. If $u\in \langle a^2\rangle \cup b\langle a^2\rangle$, then $v=su\in a\langle a^2\rangle \cup ba\langle a^2\rangle$, which implies that $u^\varphi=u$ and $v^\varphi=bv=bsu$. Since $bs\in bK$, it follows that $(u^\varphi,v^\varphi)$ is an arc of $\Sigma$. If $u\in a\langle a^2\rangle \cup ba\langle a^2\rangle$, then $v=su\in \langle a^2\rangle \cup b\langle a^2\rangle$, which implies that $u^\varphi=bu$ and $v^\varphi=v=su=sb^{-1}bu=b^{-1}s^{-1}bu$ as $s\in \langle a\rangle$. Since $K^{-1}=K$ and $a^nK=K$, we have that $b^{-1}s^{-1}=ba^ns^{-1}\in ba^nK^{-1}=bK$, and so $(u^\varphi,v^\varphi)$ is an arc of $\Sigma$. Thus, in every case, $(u^\varphi,v^\varphi)$ is an arc of $\Sigma$, and hence $\varphi$ is an isomorphism from $\Gamma$ to $\Sigma$, as claimed.

\smallskip
Note that every element in $G\setminus \langle a\rangle$ has order $4$ and has the form $ba^i$ with $1\leq i\leq 2n$. Since $\langle a\rangle$ is a characteristic subgroup of $G$, we obtain that
\begin{equation}\label{Eqnau}
a^\alpha\in \langle a\rangle \text{ and } (ba^i)^\alpha\not\in\langle a\rangle, \text{ for every } \alpha\in\Aut(G) \text{ and } 1\leq i\leq 2n.
\end{equation}
By hypothesis, $G$ has the $m$-DCI property for some $1\leq m\leq 2n-1$. Since $n$ is even, we have $m\neq 1$ by \cite[Lemma~3.1]{Ma}, and thus  $2\leq m\leq 2n-1$.

Suppose $m=2$. Take $S=\{b,b^{-1}\}$ and $T=\{a^{n/2},a^{3n/2}\}$. It is not difficult to see that $\Cay(G,S)\cong n\mathrm{C}_4\cong \Cay(G,T)$, where $n\mathrm{C}_4$ is a disjoint union of $n$ $4$-cycles. Then the $2$-DCI property of $G$ implies that there is an automorphism of $G$ mapping $S$ to $T$, contradicting \eqref{Eqnau}.

Suppose $m=3$. Take $S=\{b,b^{-1},b^2\}$ and $T=\{a^{n/2},a^{3n/2},a^n\}$. Then $\Cay(G,S)\cong n\mathrm{K}_4\cong \Cay(G,T)$, where $n\mathrm{K}_4$ is a disjoint union of $n$ copies of $\mathrm{K}_4$.  The $3$-DCI property of $G$ gives an automorphism of $G$ that maps $S$ to $T$, contradicting \eqref{Eqnau}.

Suppose $m=4, 5, 6, 7$. It follows from $|a|=2n\geq 8$ that $a^2\neq a^{-2}$. Take $K=\{a,a^{-1},a^{n+1},a^{n-1}\}$, and $H=\emptyset$, $\{1\}$, $\{1,a^n\}$ or $\{1,a^2,a^{-2}\}$, respectively. Then we derive from the Claim that $\Cay(G,bH\cup bK)\cong \Cay(G,bH\cup K)$. Since $G$ has the $m$-DCI property, there exists an automorphism of $G$ mapping $bH\cup bK$ to $bH\cup K$, contradicting \eqref{Eqnau}.

Now we may assume that $8\leq m\leq 2n-1$. Write $m=8k+j$, where $0\leq j\leq 7$ and $k\geq 1$. It follows that $4k<n$. Set
\begin{align*}
H_1&=\{a^2,a^4,\ldots, a^{2k}, a^{2n-2},a^{2n-4},\ldots, a^{2n-2k}\},\\
K_1&=\{a,a^3,\ldots, a^{2k-1}, a^{2n-1},a^{2n-3},\ldots, a^{2n-(2k-1)}\}.
\end{align*}
Then $H_1^{-1}=H_1\subseteq \langle a^2\rangle$, $K_1^{-1}=K_1\subseteq a\langle a^2\rangle$,
\begin{align*}
a^nH_1&=\{ a^{n+2},a^{n+4},\ldots, a^{n+2k}, a^{n-2},a^{n-4},\ldots, a^{n-2k}\},\\
a^nK_1&=\{a^{n+1},a^{n+3},\ldots, a^{n+2k-1}, a^{n-1},a^{n-3},\ldots, a^{n-(2k-1)}\},
\end{align*}
$(a^nH_1)^{-1}=a^nH_1\subseteq \langle a^2\rangle$, and $(a^nK_1)^{-1}=a^nK_1\subseteq a\langle a^2\rangle$. Moreover, we observe from $4k<n$ that $|H_1|=|K_1|=|a^nH_1|=|a^nK_1|=2k$. Suppose $H_1\cap a^nH_1\neq \emptyset$. Let $x\in H_1\cap a^nH_1$. Note that
\[
(H_1\cap a^nH_1)^{-1}=H_1^{-1}\cap (a^nH_1)^{-1}=H_1\cap a^nH_1.
\]
Since $x\in H_1$, we may assume $x=a^{2e}$ for some $e\in \{1,\ldots,k\}$, and then we derive from $x\in a^nH_1$ that $a^{2e}=x=a^{n-2f}$ for some $f\in \{1,\ldots,k\}$. This implies that $a^{n+2(e+f)}=1$, which is impossible because $n+2(e+f)\leq n+4k<2n$. Thus, $H_1\cap a^nH_1=\emptyset$, and so $|H_1\cup a^nH_1|=4k$. Similarly, if $K_1\cap a^nK_1\neq\emptyset$, then we can obtain $a^{n+2(e+f-1)}=1$ for some $e,f\in\{1,\ldots,k\}$, which is also impossible because $n+2(e+f-1)\leq n+4k<2n$. Thus, $K_1\cap a^nK_1=\emptyset$, and so $|K_1\cup a^nK_1|=4k$.

Note that the results of the above paragraph are proved under the condition $4k<n$, which is a consequence of the assumption. If further $4k+2<n$, then we set
\[
H_2=H_1\cup\{a^{2(k+1)},a^{2n-2(k+1)}\} \text{ and } K_2=K_1\cup \{a^{2(k+1)-1},a^{2n-2k-1}\}.
\]
Then a similar argument to the above paragraph implies that $H_2^{-1}=H_2\subseteq \langle a^2\rangle$, $K_2^{-1}=K_2\subseteq a\langle a^2\rangle$, $(a^nH_2)^{-1}=a^nH_2\subseteq \langle a^2\rangle$, $(a^nK_2)^{-1}=a^nK_2\subseteq a\langle a^2\rangle$, $|H_2|=|K_2|=|a^nH_2|=|a^nK_2|=2k+2$, $|H_2\cup a^nH_2|=|K_2\cup a^nK_2|=4k+4$. Recall that $m=8k+j$ with $k\geq 1$ and $0\leq j\leq 7$. We now discuss several cases according to $j$.

For $j=0$, write $H=H_1\cup a^nH_1$ and $K=K_1\cup a^nK_1$. We deduce from the Claim that $\Cay(G,bH\cup bK)\cong \Cay(G,bH\cup K)$. Then the $m$-DCI property of $G$ provides an automorphism of $G$ mapping $bH\cup bK$ to $bH\cup K$, contradicting \eqref{Eqnau}. For $j=1$ or $j=2$, we have the same contradiction by taking $K=K_1\cup a^nK_1$ and $H=H_1\cup a^nH_1\cup\{1\}$ or $H_1\cup a^nH_1\cup\{1,a^n\}$, respectively. For $j=3$, we have $m=8k+3\leq 2n-1$ and hence $4k+2\leq n$. If $4k+2=n$, then we take $H=H_1\cup a^nH_1\cup \{1\}$ and $K=a\langle a^2\rangle$, and if $4k+2<n$, then we take $H=H_2\cup a^nH_1\cup \{1\}$ and $K=K_1\cup a^nK_1$. Similarly, the same contradiction for \eqref{Eqnau} occurs.

For $j=4,5,6,7$, we have $4k+2<n$ as $m=8k+j\leq 2n-1$. We take $K=K_2\cup a^nK_2$, and $H=H_1\cup a^nH_1$, $H_1\cup a^nH_1\cup\{1\}$,  $H_2\cup a^nH_1$ or $H_2\cup a^nH_1\cup \{1\}$, respectively. By the Claim, $\Cay(G,bH\cup bK)\cong \Cay(G,bH\cup K)$, and then the $m$-DCI property implies that there is an automorphism of $G$ mapping $bH\cup bK$ to $bH\cup K$, contradicting \eqref{Eqnau}.
\end{proof}

For the group $G=\mathrm{Q}_{4n}$ with the $m$-DCI property and a prime divisor $p$ of $n$ such that $p+1\leq m\leq 2n-1$, we have  the following result.

\begin{lem}\label{p-odd}
Let $G$ be a generalized quaternion group of order $4n$ with $n\geq 3$. If $G$ has the $m$-DCI property such that  $p+1\leq m\leq 2n-1$ for some prime divisor $p$ of $n$, then $p$ is odd and $n$ is not divisible by $p^2$.
\end{lem}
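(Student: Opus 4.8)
Since $p$ divides $n$ and $n$ is odd by Lemma~\ref{n-odd}, the prime $p$ is automatically odd; the real content of the lemma is the assertion $p^2\nmid n$, which I would prove by contradiction, assuming $p^2\mid n$. The plan is to push the $m$-DCI property down to the cyclic subgroup $\langle a\rangle\cong\mathbb{Z}_{2n}$ and then invoke Proposition~\ref{cyclicgroupMP}. By Proposition~\ref{subgroup}, $\langle a\rangle$ inherits the $m$-DCI property, and by the complementation principle from the Introduction (a set is a CI-subset exactly when its complement in $\langle a\rangle\setminus\{1\}$ is), $\langle a\rangle$ also has the $(2n-1-m)$-DCI property. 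A structural fact I would record and reuse is that every automorphism of $G$ stabilises the characteristic subgroup $\langle a\rangle$ and induces on it the full unit group $\mathbb{Z}_{2n}^{*}$: indeed $a\mapsto a^{k}$, $b\mapsto ba^{j}$ is an automorphism for every unit $k$ and every $j$. Consequently, for subsets of $\langle a\rangle$, Cayley isomorphism in $G$ coincides with Cayley isomorphism in $\mathbb{Z}_{2n}$.

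The main engine covers the range $p+1\le m\le 2n-p-2$. Here $m$ lies in the interval $[p+1,\,2n-(p+2)]$ demanded by Proposition~\ref{cyclicgroupMP} for the group $\langle a\rangle$ of order $2n$. Its conclusion offers two alternatives: either $2n=p^{2}$, impossible since $2n$ is even while $p^{2}$ is odd, or $(2n)_{p}\mid\mathrm{lcm}(4,p)$. As $p$ is odd we have $(2n)_{p}=n_{p}$ and $\mathrm{lcm}(4,p)=4p$, so the $p$-power $n_{p}$ divides $4p$, forcing $n_{p}\in\{1,p\}$; since $p\mid n$ this gives $n_{p}=p$, contradicting $p^{2}\mid n$. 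The symmetric range $n\le m\le 2n-p-2$ reduces to the same computation applied to the complementary value $2n-1-m\in[p+1,\,2n-p-2]$.

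The genuinely hard case is the boundary band $2n-p-1\le m\le 2n-1$, where $m$ exceeds the cutoff $2n-(p+2)$ while its complement $2n-1-m\le p$ falls below $p+1$, so Proposition~\ref{cyclicgroupMP} is silent and the cyclic subgroup yields no obstruction. For this band my plan is to produce directly, under the assumption $p^{2}\mid n$, a pair of $m$-subsets of $G$ whose Cayley digraphs are isomorphic but not Cayley isomorphic. Because $p^{2}\mid n$, the multiplier $\gamma\colon a\mapsto a^{1+2n/p}$ is a well-defined automorphism of $\langle a\rangle$ of order $p$ that fixes every coset of the index-$p$ subgroup $L=\langle a^{p}\rangle$. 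Applying Lemma~\ref{general} with $M=\langle a\rangle$, a set $A\subseteq\langle a\rangle\setminus\{1\}$ and $B=A^{\gamma}$, and a padding set $C\subseteq b\langle a\rangle$ that is a union of cosets of $L$ and satisfies $|A\cup C|=m$, delivers the isomorphism $\Cay(G,A\cup C)\cong\Cay(G,B\cup C)$ for free. To contradict the $m$-DCI property I must rule out any automorphism of $G$ sending $A\cup C$ to $B\cup C$: such an automorphism acts on $\langle a\rangle$ as a unit $k$ and permutes $b\langle a\rangle$ by $ba^{i}\mapsto ba^{ki+j}$, so it must simultaneously preserve the outside block $C$ and carry $A$ to $B$; designing $C$ to be rigid (admitting only a trivial compatible multiplier) forces $k\equiv 1$, whereupon $A\neq B$ gives the contradiction via Lemma~\ref{cyclic group property}. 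The main obstacle is exactly the design of $C$: it must be rigid enough to admit only the identity multiplier yet be realisable at the exact size needed for every target valency $m$ in $[2n-p-1,\,2n-1]$, and it is this uniform rigidity check across the band — not the isomorphism, which Lemma~\ref{general} supplies automatically — where the substantive work lies.
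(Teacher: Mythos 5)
Your reduction to the boundary band is correct and coincides with the paper's first step: $p$ is odd by Lemma~\ref{n-odd}, and applying Proposition~\ref{subgroup} together with Proposition~\ref{cyclicgroupMP} to $\langle a\rangle\cong\mathbb{Z}_{2n}$ (the alternative $2n=p^2$ dying on parity) shows that under the contradiction hypothesis $p^2\mid n$ one must have $2n-(p+1)\leq m\leq 2n-1$. The gap is in your plan for that band: the construction you propose can \emph{never} violate the $m$-DCI property, no matter how the padding $C$ is designed, so the ``uniform rigidity check'' you defer is not hard work but an impossibility. Your multiplier $\gamma\colon a\mapsto a^{1+2n/p}$ extends to $\sigma\in\Aut(G)$ with $a^\sigma=a^{1+2n/p}$, $b^\sigma=b$ (you yourself record that every unit extends). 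Since $p^2\mid n$ gives $p\mid 2n/p$, we have $1+2n/p\equiv 1\pmod p$, so $\sigma$ fixes every coset of $L=\langle a^p\rangle$ in all of $G$: on $b\langle a\rangle$ it acts by $ba^i\mapsto ba^{i(1+2n/p)}$ and $i(1+2n/p)\equiv i\pmod p$. Hence for any union $C$ of $L$-cosets we get $C^\sigma=C$ and $A^\sigma=A^\gamma=B$, so $(A\cup C)^\sigma=B\cup C$ and your two connection sets are always Cayley isomorphic; the affine-map analysis you sketch only ever constrains $k$ modulo $p$, and $k=1+2n/p$ passes that test. The failure is structural: in Lemma~\ref{general} with $M=\langle a\rangle$ and $L=\langle a^d\rangle$, the condition that $\gamma\colon a\mapsto a^k$ fix every $L$-coset of $M$ says precisely $k\equiv 1\pmod d$, and any such $k$, extended by $b\mapsto b$, automatically fixes every $L$-coset of $G$. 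So \emph{every} instance of Lemma~\ref{general} with $M=\langle a\rangle$ produces Cayley-isomorphic pairs and is useless here.

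What the paper does instead, and what your setup is missing, is to put the twisting subgroup at the top of $\langle a\rangle$ rather than at the bottom: $L=P=\langle z\rangle$ with $z=a^{n'}$, $n'=2n/p$, so $|L|=p$; $M=X=\langle z,b\rangle$ of order $4p$; and $\gamma\in\Aut(X)$ with $z^\gamma=z^{-1}$, $b^\gamma=b$, which fixes the four cosets $P,bP,b^2P,b^3P$ of $P$ in $X$. The padding $C=aP\cup baP\cup\cdots\cup ba^{k-1}P$ contains the coset $aP$, all of whose elements have order $2n$ (every prime divisor of $2n$ divides $n'$ once $p^2\mid n$), while all other elements of $S=C\cup Z\cup Q$ and $T=C\cup Z^\gamma\cup Q^\gamma$ have order $4$ or $p$; here $Z=\{z,\ldots,z^j\}$ and $Q$ absorb the residue of $m$ modulo $p$. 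Any $\sigma\in\Aut(G)$ with $S^\sigma=T$ must therefore satisfy $(aP)^\sigma=aP$ and $Z^\sigma=Z^\gamma$. Writing $a^\sigma=a^r$, the first condition gives $a^{r-1}\in P=\langle a^{n'}\rangle$, whence $p\mid r-1$ because $p\mid n'$ (the contradiction hypothesis again), while the second gives $\{z^r,\ldots,z^{jr}\}=\{z^{-1},\ldots,z^{-j}\}$ and so $r\equiv-1\pmod p$ by Lemma~\ref{cyclic group property} --- a clash since $p$ is odd. Note the asymmetry that makes this work: because $|L|=p$ is small, $\gamma$ need only fix $L$-cosets inside the small group $X$, and no extension of $\gamma$ to $G$ can both invert $z$ and fix the $L$-coset $aP$; in your version $L$ has index $p$ in $\langle a\rangle$ and the coset condition propagates to all of $G$. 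To repair your band argument you must adopt this small-$L$ configuration and pin $\sigma$ down by element orders, not by rigidity of $C$.
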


\begin{proof}
Let $G=\mathrm{Q}_{4n}=\langle a,b\mid a^{2n}=1,\,b^2=a^n,\,a^b=a^{-1}\rangle$. Suppose that $G$ has the $m$-DCI property such that  $p+1\leq m\leq 2n-1$ for some prime divisor $p$ of $n$. By Lemma~\ref{n-odd}, $n$ is odd, and so $p$ is odd.

Write $n'=2n/p$, $z=a^{n'}$ and $P=\langle z\rangle$. Then $n'$ is even and $P$ is the unique subgroup of order $p$ in $G$, which implies that $P$ is characteristic in $G$. Suppose for a contradiction that $p$ divides $n'$. Note that $\langle a\rangle$ has the $m$-DCI property by Proposition~\ref{subgroup}. Then it follows from Proposition~\ref{cyclicgroupMP} that $2n-(p+1)\leq m\leq 2n-1$.
Define an integer $j\in\{1,\dots,p-2\}$ and a subset $Q$ of $G$ as follows:
\[
(j,Q)=\begin{cases}
(m\bmod p,\,\emptyset),&\text{ if }m\not\equiv 0\text{ or}\,-1\ (\bmod\ p)\\
(p-2,\,\{b\}),&\text{ if }m\equiv -1\ (\bmod\ p)\\
(p-2,\,\{b,bz\}),&\text{ if }m\equiv 0\ (\bmod\ p).\\
\end{cases}
\]
Then $m=kp+j+|Q|$ for some positive integer $k\leq n'-1$.
Write $X=\langle z,b\rangle=\langle z\rangle\rtimes \langle b\rangle$. Then $X$ has an automorphism $\gamma$ induced by $z\mapsto z^{-1}$ and $b\mapsto b$. Let $Z=\{z,\ldots,z^j\}$ and let
\begin{align*}
S&=aP\cup (baP\cup\cdots\cup ba^{k-1}P) \cup (Z\cup Q),\\
T&=aP\cup (baP\cup\cdots\cup ba^{k-1}P) \cup (Z^\gamma \cup Q^\gamma).
\end{align*}
Note that $|S|=|T|=m$. Taking $L=P$ and $M=X$ and $C=aP\cup (baP\cup\cdots\cup ba^{k-1}P)$ in Lemma~\ref{general}, we obtain $\Cay(G,S)\cong\Cay(G,T)$. Since $G$ has the $m$-DCI property, we have $S^\sigma=T$ for some $\sigma\in \Aut(G)$. Let $x\in aP$. Then $x=az^\ell=a^{\ell n'+1}$ for some $0\leq \ell\leq p-1$, which implies that $|x|=2n/(2n,\ell n'+1)$. Since $2$ divides $n'$ and $p$ divides $n'$, we have $(2n,\ell n'+1)=1$, and so $|x|=2n$. This means that every element in $aP$ has order $2n$. Note that every element in $(baP\cup\cdots\cup ba^{k-1}P)\cup Q\cup Q^\gamma$ has order $4$ and every element in $Z\cup Z^\gamma$ has order $p$. We derive from $S^\sigma=T$ that $(aP)^\sigma=aP$ and $Z^\sigma=Z^\gamma$. Since $\langle a\rangle$ is characteristic in $G$, it follows that $a^\sigma=a^r$ for some integer $r$. In particular,
\[
\{z^r,\ldots,z^{jr}\}=Z^\sigma=Z^\gamma=\{z^{-1},\ldots,z^{-j}\}.
\]
Then by Lemma~\ref{cyclic group property}, $r\equiv-1\pmod{p}$. Note that $P^\sigma=P$ as $P$ is characteristic in $G$. We conclude that $aP=(aP)^\sigma=a^\sigma P^\sigma=a^rP$, which leads to $a^{r-1}\in P=\langle a^{n'}\rangle$. However, this together with $p$ dividing $n'$ implies that $p$ divides $r-1$, contradicting $r\equiv-1\pmod p$. Thus $p$ does not divide $n'$, which means that $n$ is not divisible by $p^2$, completing the proof.
\end{proof}

Now we are ready to prove Theorem~\ref{mainth1}.

\begin{proof}[Proof of Theorem~\ref{mainth1}.]
Let $G$ be a generalized quaternion group of order $4n$ with $n\geq 3$ such that $G$ has the $m$-DCI property for some $1\leq m\leq 2n-1$. Then $n$ is odd as Lemma~\ref{n-odd} asserts. Furthermore, for any prime $p\leq m-1$, according to Lemma~\ref{p-odd} we have that $n$ is not divisible by $p^2$. This completes the proof.
\end{proof}

\section{Proof of Theorem~\ref{mainth2}}
In this section, we prove Theorem~\ref{mainth2}. Besides being important ingredients of the proof of Theorem~\ref{mainth2}, the following two lemmas are of their own interest as well.

\begin{lem}\label{sub-conjugate}
Let $G\leq A\leq\mathrm{Sym}(\Omega)$ with $G$ regular on $\Omega$, and let $H$ be a normal subgroup of odd order $n$ in $G$. Suppose $G=H\rtimes\langle b\rangle$ for some $b\in G$ with $|b|\in\{2,4\}$ such that either $G=H\times\langle b\rangle$ or $h^b=h^{-1}$ for all $h\in H$. Then for a regular subgroup $X$ of $A$ isomorphic to $G$, the subgroups $X$ and $G$ are conjugate in $A$ if and only if $H$ and the unique subgroup of order $n$ of $X$ are conjugate in $A$.
\end{lem}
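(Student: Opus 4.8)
The plan is to treat the forward implication as essentially formal and to concentrate on the backward implication, whose crux is a conjugacy statement inside $\mathrm{N}_A(H)$. First I would record that $H$ is characteristic in $G$: since $n=|H|$ is odd while $|G:H|=|b|\in\{2,4\}$ is a $2$-power, any subgroup $K\leq G$ of order $n$ satisfies $K\cap H=K$ (because $K/(K\cap H)\cong KH/H$ has odd order dividing a power of $2$), whence $K=H$; thus $H$ is the \emph{unique} subgroup of order $n$ in $G$, and the same holds verbatim in $X\cong G$. This settles the forward direction at once: if $X=G^g$ then $Y=H^g$ by uniqueness. For the converse, if $Y=H^g$ then, replacing $X$ by $X^{g^{-1}}$, I may assume $Y=H$; so it suffices to prove that two regular subgroups $G,X\leq A$ isomorphic to the given group and sharing the same subgroup $H$ of order $n$ (normal in each, hence normalised by $\langle G,X\rangle$) are conjugate in $A$. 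Since any conjugating element must send $H$ to $H$, I would look for it in $\mathrm{N}_A(H)$.

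Next I would introduce a block system. As $H\unlhd\mathrm{N}_A(H)$ is semiregular with orbits of size $n$, its orbit set $\Sigma$ is a block system with $|\Sigma|=|b|\in\{2,4\}$, and both $G$ and $X$ induce on $\Sigma$ the regular cyclic action of $G/H\cong X/H\cong\mathbb{Z}_{|b|}$; in particular $\langle b\rangle$ and $\langle c\rangle$, where $X=H\rtimes\langle c\rangle$, map isomorphically onto regular cyclic subgroups of $\mathrm{Sym}(\Sigma)$. The key auxiliary fact is that the kernel $C_0$ of $\mathrm{C}_A(H)$ acting on $\Sigma$ has \emph{odd} order: on each block $\Delta$ of size $n$ the group $C_0$ induces a subgroup of $\mathrm{C}_{\mathrm{Sym}(\Delta)}(H^\Delta)$, which is regular of order $n$ because $H^\Delta$ is regular, and $C_0$ embeds into the product of these block-restrictions, so $|C_0|$ divides $n^{|b|}$ and is odd.

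For the central case $G=H\times\langle b\rangle$ (so $b,c\in\mathrm{C}_A(H)$) this already finishes the argument. All regular cyclic subgroups of order $|b|$ in any subgroup of $\mathrm{Sym}(\Sigma)$ with $|\Sigma|\in\{2,4\}$ are conjugate in that subgroup; for $|\Sigma|=4$ the only possibilities for the image group are those containing an element of order $4$, and in each of them the regular cyclic subgroups form a single conjugacy class. Hence, after conjugating $X$ by a suitable lift in $\mathrm{C}_A(H)$, I may assume $\langle b\rangle C_0=\langle c\rangle C_0$; in this group $C_0$ is a normal Hall subgroup of odd order with $2$-complements $\langle b\rangle$ and $\langle c\rangle$, so the Schur--Zassenhaus theorem makes them conjugate in $\mathrm{C}_A(H)$, producing $g\in\mathrm{C}_A(H)$ with $H^g=H$ and $\langle b\rangle^g=\langle c\rangle$, that is $G^g=X$.

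The inversion case $h^b=h^{-1}$, which forces $H$ abelian and hence $H\leq\mathrm{C}_A(H)$, is handled along the same lines but is the main obstacle. Here $b,c$ lie outside $\mathrm{C}_A(H)$, both inducing the inversion automorphism $\iota$; since $\iota$ is a central involution of $\Aut(H)$, the preimage $E$ of $\langle\iota\rangle$ under $\mathrm{N}_A(H)\to\Aut(H)$ is normal in $\mathrm{N}_A(H)$, contains $G$ and $X$, and has index $2$ over $\mathrm{C}_A(H)$. The delicate point is to reproduce the odd-kernel argument inside $E$: the kernel of $E$ on $\Sigma$ differs from $C_0$ by at most the inversion coset, so its $2$-part is tightly constrained, and it is precisely here that the restriction $|b|\leq 4$ together with the centrality of $\iota$ must be exploited to force $\langle b\rangle$ and $\langle c\rangle$ into a common subgroup where Schur--Zassenhaus again applies. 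I expect controlling this $2$-part---equivalently, showing that the $\Aut(H)$-contribution to the block-kernel cannot obstruct the conjugacy---to be the technical heart of the proof.
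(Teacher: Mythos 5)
Your forward direction, your reduction to $Y=H$, the odd-order kernel $C_0$, and the central case $G=H\times\langle b\rangle$ are all correct (and your route there, via the conjugacy of regular cyclic subgroups in subgroups of $\mathrm{S}_4$ followed by Schur--Zassenhaus inside $\langle b\rangle C_0$, is genuinely different from the paper's). But the inversion case $h^b=h^{-1}$ --- the only case that matters for $\mathrm{Q}_{4n}$ --- is not proved: you name a strategy and defer its crux, and the obstacle you flag is real, not merely technical. The kernel of $E$ on $\Sigma$ can indeed have even order: identifying a block $\Delta$ with $H$, the map $h\mapsto h^{-1}$ is an involution in $\mathrm{N}_{\mathrm{Sym}(\Delta)}(H^{\Delta})$ inducing $\iota$ on $H$, so block-preserving $2$-elements inverting $H$ may exist in $E$, the coprimality hypothesis of Schur--Zassenhaus fails for that kernel, and the step ``force $\langle b\rangle$ and $\langle c\rangle$ into a common subgroup'' is exactly what remains unproven. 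Retreating to the odd group $C_0$ does not rescue the argument either: unlike in the central case, $E/C_0$ no longer embeds in $\mathrm{Sym}(\Sigma)$, so making the induced actions of $b$ and $c$ on $\Sigma$ agree no longer yields $\langle b\rangle C_0=\langle c\rangle C_0$, and your alignment step collapses.

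For comparison, the paper handles both cases $h^b=h^{r}$, $r=\pm1$, uniformly and sidesteps the even-kernel problem entirely by Sylow conjugation. After arranging $Y^{\alpha}=H$, the element $c^{\alpha}$ lies in $\mathrm{N}_A(H)$; choosing a Sylow $2$-subgroup $P$ of $\mathrm{N}_A(H)$ containing $b$ and $\beta\in\mathrm{N}_A(H)$ with $d:=c^{\alpha\beta}\in P$, one has $X^{\alpha\beta}=H\rtimes\langle d\rangle$ with $h^d=h^r$. Since $|\Omega|=mn$ with $n$ odd and $m=|b|\in\{2,4\}$, the $2$-group $P$ must have an orbit $\Delta$ of length exactly $m$, on which both $\langle b\rangle$ and $\langle d\rangle$ act regularly and which meets every $H$-orbit. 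A direct check in $\mathrm{S}_4$ (a product of two elements of order $4$ has $2$-power order only if they are equal or mutually inverse, applied to $bd\in P$) forces $b^{\Delta}=(d^{\pm1})^{\Delta}$; as $bd^{\mp1}$ also centralizes $H$ and fixes a point of each $H$-orbit, it fixes $\Omega$ pointwise, whence $b=d^{\pm1}$ and $X^{\alpha\beta}=H\rtimes\langle b\rangle=G$. So the fix for your gap is not a tighter control of the $2$-part of your block kernel, but trading the block-kernel analysis for this Sylow-plus-small-orbit comparison inside $\mathrm{N}_A(H)$.
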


\begin{proof}
By the assumption of the lemma, there exists $r=\pm1$ such that $h^b=h^r$ for all $h\in H$. Let $X$ be a subgroup of $A$ isomorphic to $G$. Then $X$ has a unique subgroup of order $n$, say $Y$, and we may write $X=Y\rtimes\langle c\rangle$ such that $|b|=|c|$ and $y^c=y^r$ for all $y\in Y$. We need to prove that $G$ and $X$ are conjugate in $A$ if and only if $H$ and $Y$ are conjugate in $A$. The necessity is clear because $H$ and $Y$ are the unique subgroups of order $n$ in $G$ and $X$, respectively. To finish the proof, assume that $A$ has an element $\alpha$ with $Y^\alpha=H$, and we shall show that there exists an element of $A$ conjugating $X$ to $G$.

Since $c\in \mathrm{N}_A(Y)$, we have $c^\alpha\in \mathrm{N}_A(Y^\alpha)=\mathrm{N}_A(H)$. Hence both the $2$-elements $b$ and $c^\alpha$ are in $\mathrm{N}_A(H)$. Let $P$ be a Sylow $2$-subgroup of $\mathrm{N}_A(H)$ such that $b\in P$. By Sylow Theorem, there exists $\beta \in \mathrm{N}_{A}(H)$ such that $(c^\alpha)^\beta \in P$. Then
\[
X^{\alpha\beta}=(Y\rtimes\langle c\rangle)^{\alpha\beta}=Y^{\alpha\beta}\rtimes\langle c^{\alpha\beta}\rangle
=H^\beta\rtimes\langle c^{\alpha\beta}\rangle=H\rtimes \langle c^{\alpha\beta}\rangle.
\]
Let $d=c^{\alpha\beta}\in P$. Then $|d|=|c|=|b|$ and $h^d=h^r$ for all $h\in H$ as $y^c=y^r$ for all $y\in Y$.

Write $m=|b|$. Then $m=2$ or $4$. The regularity of $G$ on $\Omega$ implies $|\Omega|=|G|=|b||H|=mn$. Since $H$ is a normal subgroup of $G$, it follows that $H$ has $m$ orbits on $\Omega$, say $\Omega_1,\Omega_2,\ldots,\Omega_m$, where  $|\Omega_i|=n$ for every $i\in\{1,\dots, m\}$. Moreover, since $G=H\rtimes\langle b\rangle$ with $|b|=m$, the element $b$ permutes the set $\{\Omega_1,\Omega_2,\ldots,\Omega_m\}$ cyclicly. Similarly, $d$ permutes $\{\Omega_1,\Omega_2,\ldots,\Omega_m\}$ cyclicly because $X^{\alpha\beta}=H\rtimes\langle d\rangle$ with $|d|=m$.

Note that $P$ is a $2$-group and $b,d\in P$. Every orbit of $P$ on $\Omega$ has length $2$-power that is at least $m$, where $m=2$ or $4$. If every orbit of $P$ on $\Omega$ has length greater than $m$, then every orbit of $P$ on $\Omega$ has length divisible by $2m$, and so $|\Omega|$ is divisible by $2m$, which is impossible because $|\Omega|=mn$ with $n$ odd. Thus $P$ has an orbit of length $m$, say $\Delta$. In particular, both $\langle b\rangle$ and $\langle d\rangle$ are regular on $\Delta$. Write $\Delta=\{\delta_1,\delta_2,\ldots,\delta_m\}$. Since $b$ permutes $\{\Omega_1,\Omega_2,\ldots,\Omega_m\}$ cyclicly, we have $|\Delta\cap \Omega_i|=1$, say $\delta_i\in \Omega_i$, for every $i\in\{1,\dots,m\}$.

Consider $b^{\Delta}$ and $d^{\Delta}$, namely, the permutations of $\Delta$ induced by $b$ and $d$, respectively. For $m=2$, since both $\langle b\rangle$ and $\langle d\rangle$ are regular on $\Delta$, we have $b^\Delta=d^\Delta$ and set $x=d$. Now assume $m=4$. It is easy to check that if a product of two elements of order $4$ in $\Sy_4$ has $2$-power order, then the two elements are equal or inverse to each other. Since $bd\in P$ has $2$-power order, we conclude that $b^\Delta=d^\Delta$ or $b^\Delta=(d^{-1})^\Delta$. Set $x=d$ in the former case, and $x=d^{-1}$ in the latter case. Then summarizing this paragraph, we obtain $b^\Delta=x^\Delta$ with $x=d^{\pm1}$. Consequently, $bx^{-1}$ fixes every element in $\Delta$.

Since $h^d=h^r$ for every $h\in H$ and $r=\pm1$, we have $h^{d^{-1}}=h^r$ for every $h\in H$. This together with $x=d^{\pm1}$ gives $h^x=h^r=h^b$ for every $h\in H$, which indicates that $bx^{-1}$ centralizes $H$. For each $i\in\{1,\dots,m\}$, since $bx^{-1}$ fixes $\delta_i$ and $\Omega_i$ is the orbit of $H$ containing $\delta_i$, it follows that $bx^{-1}$ fixes every element in $\Omega_i$. Hence $bx^{-1}=1$, and so $\langle b\rangle=\langle x\rangle=\langle d\rangle$. As $X^{\alpha\beta}=H\rtimes\langle c^{\alpha\beta}\rangle=H\rtimes\langle d\rangle$, this shows that $X^{\alpha\beta}=H\rtimes\langle b\rangle=G$, which completes the proof.
\end{proof}

Based on Lemma~\ref{sub-conjugate}, we may prove the following:

\begin{lem}\label{p-CI-cyclic}
Let $G$ be a cyclic group of order $2^{\ell}n$ with $\ell\in \{0,1,2\}$ and $n$ odd, and let $p$ be the least prime divisor of $n$. Then every connected Cayley digraph of $G$ with valency at most $p$ is a CI-digraph.
\end{lem}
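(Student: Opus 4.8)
The plan is to verify Babai's criterion, Proposition~\ref{CI-graph-prop}: writing $\Gamma=\Cay(G,S)$ with $|S|=m\le p$ and $\langle S\rangle=G$, and setting $A=\Aut(\Gamma)$, I must show that every regular subgroup $X\le A$ with $X\cong G$ is conjugate to $R(G)$ in $A$. Since $G$ is cyclic of order $2^\ell n$ with $n$ odd, I write $R(G)=H\times B$, where $H$ is the characteristic Hall $2'$-subgroup of order $n$ and $B$ is the Sylow $2$-subgroup of order $2^\ell$. When $\ell\in\{1,2\}$, the group $G$ satisfies the hypotheses of Lemma~\ref{sub-conjugate} with the abelian alternative $h^b=h$, so it suffices to conjugate $H$ to the unique subgroup $Y$ of order $n$ of $X$; when $\ell=0$ we have $H=R(G)$ and $Y=X$, and the same goal remains. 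Thus in every case the task reduces to proving that the odd-order cyclic subgroups $H$ and $Y$ are conjugate in $A$.

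I would prove this reduced statement by induction on $n$, the base case $n=1$ being immediate since then $G$ is cyclic of order dividing $4$ and hence a DCI-group. For the inductive step, let $P$ be the unique subgroup of order $p$ of $H$ (equivalently, of $R(G)$). Because $P\unlhd R(G)$ is semiregular, its orbits form a block system, and the quotient digraph $\Gamma_P=\Cay(G/P,\overline{S})$ is connected with $G/P\cong\mathbb{Z}_{2^\ell(n/p)}$ and valency $|\overline{S}|\le m\le p$. The least prime divisor of $n/p$ is at least $p$ (equal to $p$ exactly when $p^2\mid n$, and larger otherwise), so $\Gamma_P$ again has valency at most the least prime divisor of the odd part of its group; by the inductive hypothesis, $\Gamma_P$ is a CI-digraph.

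The remaining work is to transfer this information back to $\Gamma$. Concretely, I would analyze the $A$-invariant partitions of $V(\Gamma)$ in relation to the $P$-block system, aiming to show that the order-$p$ block system attached to $P$ (and the corresponding one for $Y$) is preserved by all of $A$. This would make $A$ induce a subgroup $\overline{A}\le\Aut(\Gamma_P)$ carrying the images of $H$ and $Y$ to regular cyclic subgroups isomorphic to the odd part of $G/P$. Applying the CI-property of $\Gamma_P$ through Proposition~\ref{CI-graph-prop} at the quotient level would then produce an element conjugating these images, which I would lift to an element of $A$ conjugating $H$ to $Y$, using that $\Gamma$ behaves as a $P$-cover of $\Gamma_P$ so that quotient automorphisms lift, together with a Frattini/Sylow adjustment inside the kernel of the action on blocks, in the spirit of the proof of Lemma~\ref{sub-conjugate}.

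The hard part will be precisely this control of the block structure and the lifting. A priori $A$ need not normalize $P$, and $\Aut(\Gamma_P)$ may be strictly larger than the group $\overline{A}$ induced by $A$, so both the $A$-invariance of the size-$p$ fibres and the surjectivity needed to lift a conjugating element require justification. This is exactly where the hypothesis $m\le p$ with $p$ the least prime divisor is decisive: it forbids an automorphism from fusing the $p$ vertices inside a fibre and keeps the quotient strictly within the inductive regime. Making this rigorous — guaranteeing that the fibres are genuine blocks for the full group $A$ and that no automorphism of $\Gamma_P$ fails to lift — is the central difficulty, and Lemma~\ref{no-arc} on covers is the natural tool for the lifting once the local structure at a vertex has been pinned down.
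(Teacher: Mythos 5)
Your opening reduction is sound and coincides with the paper's: by Proposition~\ref{CI-graph-prop} and Lemma~\ref{sub-conjugate} (with the abelian alternative $G=H\times\langle b\rangle$, $|b|=2^\ell$), it suffices to conjugate the odd Hall subgroup $H=\langle R(a^{2^\ell})\rangle$ to the unique subgroup $Y$ of order $n$ of an arbitrary regular subgroup $X\cong G$ of $A$. But from that point on your proposal is a plan rather than a proof, and the step you yourself flag as ``the central difficulty'' is a genuine gap, not a technicality. You assert that the hypothesis $m\leq p$ ``forbids an automorphism from fusing the $p$ vertices inside a fibre,'' but no argument is offered, and none of the tools you name delivers it: the orbits of $P\leq R(G)$ form an $A$-invariant partition only if something normalizes $P$ in $A$, which is exactly what is in question; Lemma~\ref{no-arc}(b), your proposed lifting device, requires $\Gamma$ to be $G$-locally primitive, i.e.\ in effect arc-transitive of prime valency, a hypothesis you never establish; and even granting an $A$-invariant fibre system, the induced map $A\to\Aut(\Gamma_P)$ need not be surjective, so the conjugating element supplied by the inductive CI-property of $\Gamma_P$ lives in $\Aut(\Gamma_P)$ and has no reason to lift to $A$. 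The arc-transitive, valency-$p$ regime is precisely where $\Aut(\Gamma)$ can be much larger than any fibre-respecting analysis suggests (lexicographic-product circulants have wreath-product automorphism groups), and it is where your induction stalls.

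The paper's proof shows what is actually needed and avoids quotients entirely. It splits on whether $p$ divides $|A_1|$. If $p\nmid|A_1|$, then connectivity plus valency at most $p$ forces every prime divisor of $|A_1|$ to be at most $p$, and since $p$ is the least prime divisor of $n$, the order $|A_1|$ is coprime to $n$; hence $\langle R(a^{2^\ell})\rangle$ is a nilpotent Hall $\pi$-subgroup of $A$ ($\pi$ the primes dividing $n$), and Wielandt's conjugacy theorem for nilpotent Hall subgroups (\cite[Theorem 9.1.10]{Robinson}) conjugates $Y$ to $H$ in one stroke --- no blocks, no induction. If $p\mid|A_1|$, then the valency must equal $p$ and $\Gamma$ is arc-transitive, and the paper invokes the classification result that every connected arc-transitive Cayley digraph of a cyclic group is a CI-digraph \cite[Theorem 1.3]{C.H.Li9}. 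In other words, the case your induction cannot close is handled in the literature by a deep classification, which strongly suggests that your ``control of the block structure and the lifting'' cannot be made rigorous by elementary means; without supplying either that dichotomy or an argument of comparable strength, the proposal does not constitute a proof.
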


\begin{proof}
Write $G=\langle a\rangle\cong \mathbb{Z}_{2^{\ell}n}$. Let $\Gamma=\Cay(G,S)$ be a connected Cayley digraph with $|S|\leq p$, and let $A=\Aut(\Gamma)$. If $\ell=0$, then since $G$ is a connected $p$-DCI-group (~\cite[Theorem~1.1]{C.H.Li2}), $\Gamma$ is a CI-digraph, as required. Next we consider the case $\ell\in \{1,2\}$. Denote by $A_1$ the stabilizer of $1$ in $A$.

Assume that $p$ does not divide $|A_1|$. Since $\Gamma$ is connected and has valency at most $p$, each prime divisor of $|A_1|$ is at most $p$. Then as $p$ is the least prime divisor of $n$, we conclude that $|A_1|$ is coprime to $n$. Let $\pi$ be the set of prime divisor of $n$. It follows from $A=R(G)A_1$ that $\langle a^{2^\ell}\rangle$ is a Hall $\pi$-subgroup of $A$. By \cite[Theorem 9.1.10]{Robinson}, all nilpotent Hall $\pi$-subgroup of $A$ are conjugate. Hence all subgroups isomorphic to $\langle a^{2^\ell}\rangle$ are conjugate in $A$, and so all regular subgroups of $A$ isomorphic to $R(G)$ are conjugate by Lemma~\ref{sub-conjugate}. This shows that $\Gamma$ is a CI-digraph by Proposition~\ref{CI-graph-prop}.

Assume that $p$ divides $|A_1|$. If $\Gamma$ has valency less than $p$, then the connectivity of $\Gamma$ means that $|A_1|$ is not divisible by $p$, a contradiction. Thus $\Gamma$ has valency $p$, and it further follows from $p$ dividing $|A_1|$ that $\Gamma$ is arc-transitive. Then by \cite[Theorem 1.3]{C.H.Li9}, every connected arc-transitive Cayley digraph over a cyclic group is a CI-digraph, and hence $\Gamma$ is a CI-digraph. This completes the proof.
\end{proof}

Let $X$ and $Y$ be digraphs. The {\em lexicoproduct $X[Y]$} of $X$ and $Y$ is defined as the digraph with vertex set $V(X)\times V(Y)$ such that $((x_1,y_1),(x_2,y_2))$, where $x_1,x_2\in V(X)$ and $y_1,y_2\in V(Y)$, is an arc if and only if $(x_1,x_2)\in\Arc(X)$, or $x_1=x_2$ and $(y_1,y_2)\in \Arc(Y)$. We now give some sufficient conditions for CI-digraphs of generalized quaternion groups $\mathrm{Q}_{4n}$ with $n\geq 3$ odd.

\begin{lem}\label{p-CI-Q}
Let $\Gamma=\Cay(\mathrm{Q}_{4n},S)$ be a connected Cayley digraph of $\mathrm{Q}_{4n}$ with $n\geq 3$ odd, and let $A=\Aut(\Gamma)$. Then the following statements hold:
\begin{enumerate}[{\rm (a)}]
\item If $|A_1|$ is coprime to $n$, then $\Gamma$ is a CI-digraph.
\item If $n$ is a power of a prime $p$ and $\Gamma$ is arc-transitive with $|S|=p$, then  $\Gamma$ is a CI-digraph.
\end{enumerate}
\end{lem}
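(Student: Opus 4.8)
The plan is to derive both parts from one conjugacy criterion. Write $H=\langle a^2\rangle$, the characteristic subgroup of $G=\mathrm{Q}_{4n}$ of odd order $n$. One checks that $G=H\rtimes\langle b\rangle$ with $|b|=4$ and $h^b=h^{-1}$ for every $h\in H$, so the hypotheses of Lemma~\ref{sub-conjugate} hold with $R(G)\le A\le\mathrm{Sym}(V(\Gamma))$ in place of $G\le A\le\mathrm{Sym}(\Omega)$. Combining Lemma~\ref{sub-conjugate} with Babai's criterion (Proposition~\ref{CI-graph-prop}), the digraph $\Gamma$ is a CI-digraph if and only if, for every regular subgroup $X\le A$ with $X\cong G$, the unique subgroup $Y$ of order $n$ in $X$ is conjugate to $R(H)$ in $A$. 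Thus both parts reduce to proving this single conjugacy statement.

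For part~(a), let $\pi$ be the set of prime divisors of $n$. As $n$ is odd and $|A|=4n\,|A_1|$ with $\gcd(|A_1|,n)=1$, the $\pi$-part of $|A|$ is exactly $n$, so the cyclic---hence nilpotent---group $R(H)$ of order $n$ is a nilpotent Hall $\pi$-subgroup of $A$, and so is $Y$. By the conjugacy theorem for nilpotent Hall subgroups (the result \cite[Theorem~9.1.10]{Robinson} already used in Lemma~\ref{p-CI-cyclic}), $Y$ is conjugate to $R(H)$, completing part~(a).

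For part~(b) I would first record that arc-transitivity with $|S|=p$ makes $A_1$ transitive, hence (prime degree) primitive, on $\Gamma^+(1)$, so $\Gamma$ is $A$-locally primitive and $p\mid|A_1|$. The decisive step is a Sylow orbit count: pick a Sylow $p$-subgroup $P$ of $A$ containing $R(H)$. Since $R(G)$ is regular, $R(H)$ is semiregular with four orbits of size $p^e=n$, and every $P$-orbit is a union of these; as the orbits partition the $4p^e$ vertices and have $p$-power size, their sizes $p^{e+c_i}$ satisfy $\sum_i p^{c_i}=4$. For $p\ge5$ this forces all $c_i=0$, so $P=R(H)$ is semiregular and $p\nmid|A_1|$, contradicting local primitivity; hence part~(b) is vacuously true when $p\ge5$. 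The only surviving configuration is $p=3$, where $P$ has exactly two orbits, of sizes $3^{e+1}$ and $3^e$, and both $R(H)$ and (after conjugating $X$ into $P$) $Y$ are cyclic subgroups of order $3^e$ acting regularly on the short orbit $\Omega_2$ of size $3^e$.

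The remaining task, and the main obstacle, is to upgrade in the case $p=3$ the statement ``$Y$ and $R(H)$ are cyclic of order $3^e$ inside the common Sylow subgroup $P$'' to ``$Y$ and $R(H)$ are $A$-conjugate''. Group theory of $P$ alone will not suffice: a Sylow subgroup of type $\mathbb{Z}_{3^e}\times\mathbb{Z}_3$ contains several cyclic subgroups of order $3^e$ that are not forced to be conjugate in $A$. The intended remedy is to reinject the digraph structure: using connectivity and $A$-local primitivity, apply Lemma~\ref{no-arc}(b) to a suitable normal subgroup so as to present $\Gamma$ as a cover of, or to transfer the conjugacy problem onto, a connected arc-transitive Cayley digraph of a cyclic group of order dividing $2\cdot3^e$ with valency at most $3$, and then invoke the cyclic CI-result Lemma~\ref{p-CI-cyclic}---with Proposition~\ref{regular-p-group} regulating the arithmetic of the $3$-group---to place $Y$ and $R(H)$ in one conjugate. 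Making this normal-quotient reduction land on a cyclic group to which Lemma~\ref{p-CI-cyclic} applies, and legitimately lifting the resulting conjugacy through the cover, is where the genuine difficulty lies.
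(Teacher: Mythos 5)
Your part~(a) is correct and is essentially the paper's own argument (Hall $\pi$-subgroup via $|A|=4n|A_1|$, nilpotent Hall conjugacy from \cite[Theorem 9.1.10]{Robinson}, then Lemma~\ref{sub-conjugate} and Proposition~\ref{CI-graph-prop}), and your opening reduction of both parts to conjugacy of the order-$n$ subgroups is legitimate. Part~(b), however, has a genuine gap at the decisive step. From ``all $P$-orbits have size $p^e$ when $p\geq 5$'' you infer ``$P=R(H)$ is semiregular, so $p\nmid |A_1|$.'' This is a non sequitur: a transitive $p$-group of degree $p^e$ need not be regular (Sylow $p$-subgroups of $\mathrm{S}_{p^e}$ are transitive of degree $p^e$ and far larger than $p^e$), so the orbit count $\sum_i p^{c_i}=4$ with all $c_i=0$ only says the $P$-orbits coincide with the $R(H)$-orbits; it is entirely compatible with $|P|\geq p^{e+1}$ and nontrivial point stabilizers, which is exactly the situation forced by arc-transitivity. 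So $p\geq 5$ is not eliminated this way, and your case analysis collapses at its first branch. Moreover you explicitly leave the $p=3$ conjugacy upgrade unproven, so part~(b) is not established in any case.

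The paper's route avoids your Sylow orbit count entirely. For $\ell=1$ it quotes Muzychuk's result that $\mathrm{Q}_{4p}$ is a DCI-group \cite{Mu4}. For $\ell\geq 2$ it works inside $N=\mathrm{N}_A(R(H))$: since $p$ divides $|A_1|$, $R(H)$ is not Sylow in $A$, and Proposition~\ref{proper-p-group} plus Sylow's theorem give $p^{\ell+1}\mid |N|$; as $R(G)\leq N$, the digraph $\Gamma$ is $N$-arc-transitive and $N$-locally primitive with $R(H)\unlhd N$, so Lemma~\ref{no-arc}(b) yields a dichotomy. In the cover case $\Gamma_{R(H)}$ has $4$ vertices and out-valency $p$, forcing $p=3$, and then no conjugacy analysis is needed at all: one simply cites that $\mathrm{Q}_{4p^\ell}$ is a $3$-DCI-group \cite[Theorem 1.4]{Ma}. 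In the case $\Gamma_{R(H)}=\overrightarrow{\mathrm{C}}_4$, a substantial centralizer analysis (first showing, via the regular $p$-group machinery of Proposition~\ref{regular-p-group}, that the kernel $K$ of $C=\mathrm{C}_A(R(H))$ on the quotient strictly contains $R(H)$) proves $\Gamma\cong\overrightarrow{\mathrm{C}}_{4p^{\ell-1}}[\overline{\mathrm{K}}_p]$, which produces a block system whose existence is incompatible with $\langle S\rangle=G$; so this case cannot occur. In particular your guess that the residual difficulty is conjugating cyclic subgroups of order $3^e$ inside a Sylow subgroup of shape $\mathbb{Z}_{3^e}\times\mathbb{Z}_3$, to be handled by a cyclic-quotient appeal to Lemma~\ref{p-CI-cyclic}, does not match how the problem actually resolves, and no such transfer-through-a-cover argument is needed.
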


\begin{proof}
Let $G=\mathrm{Q}_{4n}=\langle a,b\mid a^{2n}=1,\,b^2=a^n,\,a^b=a^{-1}\rangle$, and let $\pi$ be the set of prime divisors of $n$.

To prove part~(a), suppose that $|A_1|$ is coprime to $n$. Since $A=R(G)A_1$ and $n$ is odd, we conclude that $\langle R(a^2)\rangle$ is a Hall $\pi$-subgroup of $A$. Since all nilpotent Hall $\pi$-subgroups of $A$ are conjugate by~\cite[Theorem 9.1.10]{Robinson}, Lemma~\ref{sub-conjugate} implies that all regular subgroups of $A$ isomorphic to $R(G)$ are conjugate. Hence $\Gamma$ is a CI-digraph by Proposition~\ref{CI-graph-prop}. This proves part~(a).

To prove part (b), suppose that $n=p^\ell$ for an odd prime $p$ and a positive integer $\ell$, and that  $\Gamma$ is arc-transitive with $|S|=p$. If $\ell=1$, then $\mathrm{Q}_{4p}$ is a DCI-group by \cite{Mu4}, and so $\Gamma$ is a CI-digraph.

From now on we assume that $\ell\geq 2$. Since $A_1$ acts transitively on $S$, the order $|A_1|$ is divisible by $p$, and so $|A|=|R(G)||A_1|=4n|A_1|$ is divisible by $p^{\ell+1}$. Write
\[
H=\langle a^2\rangle\ \text{ and }\ N=\mathrm{N}_A(R(H)).
\]
Since $|R(H)|=|H|=p^{\ell}$ and $|A|$ is divisible by $p^{\ell+1}$, it is clear that $R(H)$ is not a Sylow $p$-subgroup of $A$. By Sylow Theorem and Proposition~\ref{proper-p-group}, $|N|$ is divisible by $p^{\ell+1}$. Since $R(H)\unlhd R(G)$, we get $R(G)\leq N$. It follows that $\Gamma$ is $N$-vertex-transitive, and
\begin{equation}\label{Eqn1}
|N_u|=|N|/|R(G)| \text{ is divisible by } p \text{ for every } u\in V(\Gamma).
\end{equation}
Hence $\Gamma$ is $N$-arc-transitive as $|S|=p$. Since $R(H)$ is characteristic in $R(G)$ and $R(G)\unlhd N$, we have $R(H)\unlhd N$.
Since $|S|=p$, it follows that $\Gamma$ is $N$-locally primitive. The orbit set of $R(H)$ on $V(\Gamma)$ is $\{H,bH,b^2H,b^3H\}=V(\Gamma_{R(H)})$. By Lemma~\ref{no-arc}\,(b), either $R(H)$ is the kernel of $N$ on $V(\Gamma_{R(H)})$ and $\Gamma$ is a $R(H)$-cover of $\Gamma_{R(H)}$, or $\Gamma_{R(H)}$ is the directed cycle $\overrightarrow{\mathrm{C}}_4$ of length $4$.

Assume that $R(H)$ is the kernel of $N$ on $V(\Gamma_{R(H)})$ and $\Gamma$ is a $R(H)$-cover of $\Gamma_{R(H)}$. Then $\Gamma_{R(H)}$ has order $4$ and out-valency $p\geq3$. Hence $p=3$. According to~\cite[Theorem 1.4]{Ma}, $\mathrm{Q}_{4p^\ell}$ is a 3-DCI-group. Hence $\Gamma$ is a CI-digraph, as required.

Now assume that $\Gamma_{R(H)}=\overrightarrow{\mathrm{C}}_4$. Since $\Gamma$ is connected, we have $ba^i\in S$ for some integer $i$. Note that there is an automorphism $\alpha$ of $G$ sending $a$ and $b$ to $a$ and $ba^i$, respectively. Then replacing $S$ by $S^\alpha$, we may assume that $b\in S$, whence
\begin{equation}\label{Eqn2}
\Arc(\Gamma_{R(H)})=\{(H,bH),(bH,b^2H),(b^2H,b^3H),(b^3H,H)\}.
\end{equation}
Let $C=\mathrm{C}_A(R(H))$ and let $K$ be the kernel of $C$ acting on $V(\Gamma_{R(H)})$. Then $R(H)\leq C$, $R(b^2)\in C$, $C\unlhd N$, and $C/K\leq\Aut(\Gamma_{R(H)})=\Aut(\overrightarrow{\mathrm{C}}_4)\cong\mathbb{Z}_4$. Note that $b^iH$ is an orbit for both $R(H)$ and $K$. By the Frattini argument, $K=R(H)K_u$ for $u\in V(\Gamma)$. As $\Gamma_{R(H)}=\overrightarrow{\mathrm{C}}_4$, it follows that $C_u$ fixes $V(\Gamma_{R(H)})$ pointwise. Hence $C_u\leq K$ and $C_u=K_u$. Since $K\leq C=\mathrm{C}_A(R(H))$, we obtain
\begin{equation}\label{Eqn3}
K=R(H)\times C_u\text{ for every }u\in V(\Gamma).
\end{equation}
Consequently, $C_1C_{b^2}\unlhd K$. Noting that $R(H)$ is a $p$-group, it follows from~\eqref{Eqn3} that $|K|_{p'}=|C_1|_{p'}=|C_{b^2}|_{p'}=|C_1C_{b^2}|_{p'}$. Since $|C_1\cap C_{b^2}|=|C_1||C_{b^2}|/|C_1C_{b^2}|$,
this implies
\[
|C_1\cap C_{b^2}|_{p'}=|K|_{p'}.
\]

Suppose for a contradiction that $K=R(H)$. Then \eqref{Eqn3} implies that $C_1=1$, and so $N_1$ acts faithfully on $R(H)$ by conjugation. Hence $N_1\leq \Aut(R(H))\cong\mathbb{Z}_{p^{\ell-1}(p-1)}$ is cyclic. This together with \eqref{Eqn1} implies that $N_1$ has a unique subgroup of order $p$, say $P$. Let $L$ be the kernel of $N$ acting on $V(\Gamma_{R(H)})$. Since $\Gamma_{R(H)}=\overrightarrow{\mathrm{C}}_4$, it follows that $N_1$ fixes $V(\Gamma_{R(H)})$ pointwise, which means that $N_1=L_1$. Thus, by the Frattini argument, $L=R(H)N_1$. Consequently, $L/R(H)$ is cyclic. Write $M=R(H)P$. Then $M/R(H)$ is the unique subgroup of order $p$ of $L/R(H)$ and so characteristic in $L/R(H)$. Note that $L/R(H)\unlhd N/R(H)$. Then $M/R(H)\unlhd N/R(H)$. This implies that $R(H)\leq M\unlhd N$, and so all orbits of $M$ on $V(\Gamma)$ have length $|R(H)|$. Clearly,
\[
R(H)P=M=R(H)M_1=R(H)M_b.
\]
Since $|M|=|R(H)||P|=p|R(H)|$, we obtain $|M_1|=p=|M_b|$. Hence both $M_1$ and $M_b$ are cyclic groups of order $p$. Recall that $\Aut(R(H))\cong\mathbb{Z}_{p^{\ell-1}(p-1)}$ and $\ell\geq 2$. The unique subgroup of order $p$ of $\Aut(R(H))$ is generated by the automorphism $\gamma$ of $R(H)=\langle R(a^2)\rangle$ defined by
\[
\gamma: R(a^2)\mapsto R(a^2)^r=R(a^{2r}),\ \text{ where } r:=p^{\ell-1}+1.
\]
Since the action of $M_1\leq N_1$ by conjugation on $R(H)$ is faithful, it follows that
\[
R(a^2)^{\alpha}=R(a^2)^{\gamma}=R(a^{2r}) \text{ for some generator } \alpha \text{ of } M_1.
\]
For integers $i$ and $j$, since $a^2$ has order $n=p^\ell$ and $r^j\equiv jp^{\ell-1}+1\pmod{p^\ell}$, we have
\begin{equation}\label{Eqn4}
R(a^{2i})^{\alpha^j}=R(a^{2ir^j})=R(a^{2i(jp^{\ell-1}+1)})\in R(a^{2i})\langle R(a^{2p^{\ell-1}})\rangle.
\end{equation}
Take arbitrary $x,y\in M$. Since $M=R(H)M_1$, we may write $x=x_1x_2$ and $y=y_1y_2$ with $x_1,y_1\in R(H)$ and $x_2,y_2\in M_1$. Then the commutator
\[
[x,y]=[x_1x_2,y_1y_2]=(x_1x_2)^{-1}(y_1y_2)^{-1}(x_1x_2)(y_1y_2)=(x_1^{-1})^{x_2}(y_1^{-1}x_1)^{x_2y_2}(y_1)^{y_2}.
\]
This together with \eqref{Eqn4} implies that $[x,y]\in \langle R(a^{2p^{\ell-1}})\rangle$. Hence the derived group
\[
M'=\langle R(a^{2p^{\ell-1}})\rangle\cong\mathbb{Z}_p.
\]
Since $M=M_bR(H)$, we may write $\alpha=\beta R(a^2)^t$ for some $\beta$ of $M_b$ and integer $t$. Since $|M'|=p$, we derive from Proposition~\ref{regular-p-group} that $(R(a^2)^t)^p=(\beta^{-1}\alpha)^p=(\beta^{-1})^p\alpha^p=1$. Therefore, $t$ is divisible by $p^{\ell-1}$.
In particular, $t$ is divisible by $p$ as $\ell\geq 2$. Since
\[
b^\alpha=b^{\beta R(a^2)^t}=b^{R(a^2)^t}=ba^{2t},
\]
we derive for each integer $k$ that
\[
(ba^{2tk})^\alpha=b^{R(a^{2tk})\alpha}=b^{\alpha R(a^{2tk})^\alpha}=b^{\alpha R(a^{2tkr})}=(ba^{2t})^{R(a^{2tkr})}=ba^{2t(1+kr)}.
\]
Hence $\alpha$ stabilizes $b\langle a^{2t}\rangle$, and so $M_1=\langle \alpha\rangle$ stabilizes $b\langle a^{2t}\rangle$. Note that the stabilizer $M_1$ is transitive or trivial on the out-neighborhood $\Gamma^+(1)=S$ of $1$ in $V(\Gamma)$. If $M_1$ is trivial on $S$, then we obtain a contradiction that $M_1=1$ as $\Gamma$ is $N$-vertex-transitive and strongly connected. Hence $M_1$ is transitive on $S$, and so $S=b^{M_1}$ as $b\in S$. Then $S=b^{M_1}\subseteq b\langle a^{2t}\rangle$, and so $\langle S\rangle\leq \langle b,a^p\rangle<G$ as $p$ divides $t$. This contradicts the connectivity of $\Gamma$.

Thus we have $K\neq R(H)$.
Suppose for a contradiction that $N$ has an imprimitive block system on $V(\Gamma)$ such that each block is an independent set of size $p$ and the induced subdigraph of each two blocks is either $\overline{\mathrm{K}}_{2p}$ or $\overrightarrow{\mathrm{K}}_{p,p}$. Let $\Delta$ be an imprimitive block containing $1$. Then since $R(G)$ is a regular subgroup of $N$, we derive that $\Delta$ is a subgroup of $G$ and $S$ is a union of left cosets of $\Delta$ in $G$. Since $b\in S$ and $|S|=p$, it follows that $\Delta=\langle a^{2p^{\ell-1}}\rangle$ and $S=b\Delta=b\langle a^{2p^{\ell-1}}\rangle$. This contradicts the condition $\langle S\rangle=G$ by the connectivity of $\Gamma$. Thus $N$ does not have such an imprimitive block system. However, we prove in the following, distinguishing two cases, that $\Gamma\cong\overrightarrow{\mathrm{C}}_{4p^{\ell-1}}[\overline{\mathrm{K}}_p]$, which will complete the proof of part~(b).

\smallskip
\noindent{\bf Case 1:} $C_1\cap C_{b^2}=1$.

\smallskip
Recall that $R(H)\times C_1=K\neq R(H)$. Then $C_1\neq 1$, and $|C_1|_{p'}=|K|_{p'}=|C_1\cap C_{b^2}|_{p'}=1$. This means that $C_1$ is a $p$-group. Since $C/K\leq\mathbb{Z}_4$, it follows that $K=R(H)\times C_1$ is a Sylow $p$-subgroup of $C$ and thus characteristic in $C$. Note that
\[
C_1\cong C_1/(C_1\cap C_{b^2})\cong C_1C_{b^2}/C_{b^2}\leq K/C_{b^2}\cong R(H)
\]
is cyclic. Hence $K$ has a characteristic subgroup $D=X\times Y\cong \mathbb{Z}_p^2$, where $\mathbb{Z}_p\cong X\leq R(H)$ and $\mathbb{Z}_p\cong Y\leq C_1$. Then $D$ is characteristic in $C$. As $C\unlhd N$, we have $D\unlhd N$. Since $X$ is semiregular of order $p$ and $Y$ fixes the vertex $1$, we then conclude that the orbits of $D=YX$ on $V(\Gamma)$ all have length $p$. For every $u\in V(\Gamma)$, it follows that $D=XD_u$, and so $D_u\cong\mathbb{Z}_p$ is either transitive or trivial on the out-neighborhood $\Gamma^+(u)$ of $u$. If $D_u$ is trivial on $\Gamma^+(u)$, then $D_u=1$ as $\Gamma$ is $N$-vertex-transitive and strongly connected, contradicting to $D_u\cong\mathbb{Z}_p$. Thus $D_u$ is transitive on $\Gamma^+(u)$ for every $u\in V(\Gamma)$. This implies that if $\Delta_1$ and $\Delta_2$ are two orbits of $D$ and there is an arc from some vertex of $\Delta_1$ to some vertex of $\Delta_2$, then $(x,y)\in \Arc(\Gamma)$ for all $x\in \Delta_1$ and $y\in \Delta_2$. Since $\Gamma$ has out-valency $p$, it follows that $\Gamma\cong \overrightarrow{\mathrm{C}}_{4p^{\ell-1}}[\overline{\mathrm{K}}_p]$, as required.

\smallskip
\noindent{\bf Case 2:} $C_1\cap C_{b^2}\neq1$.

\smallskip
Recall that $\Gamma_{R(H)}=\overrightarrow{\mathrm{C}}_4$ and $b\in S$, we have $S\cap(H\cup b^2H\cup b^3H)=\emptyset$. From $C/K\leq\mathbb{Z}_4$ we deduce that $B:=K\langle R(b^2)\rangle\unlhd C$. Since $R(b^2)$ interchanges $C_1$ and $C_{b^2}$ by conjugation, we have $C_1\cap C_{b^2}\unlhd B$. Note that $H\cup b^2H$ and $bH\cup b^3H$ are the orbits of $B$ on $V(\Gamma)$. Then the orbits of $C_1\cap C_{b^2}$ on $bH\cup b^3H$ have the same length, say $t$. Hence the valency of $\Gamma$ is a multiple of $t$. As $\Gamma$ is $p$-valent, we deduce that $t=1$ or $p$. Recall that
\[
K=R(H)\times C_1=R(H)\times C_{b^2}.
\]
The group $C_1\cap C_{b^2}$ fixes $H\cup b^2H$ pointwise. If $t=1$, then $C_1\cap C_{b^2}$ fixes both $H\cup b^2H$ and $bH\cup b^3H$ pointwise, which means that $C_1\cap C_{b^2}=1$, a contradiction. Thus $t=p$, that is, the orbits of $C_1\cap C_{b^2}$ on $bH\cup b^3H$ all have length $p$. Since $R(b)\in N$ normalizes $C$, it follows that $C_b\cap C_{b^3}=(C_1\cap C_{b^2})^{R(b)}$ fixes $(H\cup b^2H)^{R(b)}=bH\cup b^3H$ pointwise and that the orbits of $C_b\cap C_{b^3}$ on $(bH\cup b^3H)^{R(b)}=H\cup b^2H$ all have length $p$. Let
\[
T=(C_1\cap C_{b^2})(C_b\cap C_{b^3}).
\]
Then all orbits of $T$ on $V(\Gamma)$ have length $p$. Note that $C_1\cap C_{b^2}\leq T_v$ for every $v\in H\cup b^2H$ and $C_b\cap C_{b^3}\leq T_w$ for every $w\in bH\cup b^3H$. Then we derive from~\eqref{Eqn2} that the stabilizer $T_u$ is transitive on the out-neighbors of $u$ in $\Gamma$ for every $u\in V(\Gamma)$. This implies that if $\Delta_1$ and $\Delta_2$ are two orbits of $T$ and there exists an arc from some vertex of $\Delta_1$ to some vertex of $\Delta_2$, then $(x,y)\in \Arc(\Gamma)$ for all $x\in \Delta_1$ and $y\in \Delta_2$. Hence $\Gamma\cong \overrightarrow{\mathrm{C}}_{4p^{\ell-1}}[\overline{\mathrm{K}}_p]$, as required.
\end{proof}

Let $\Gamma$ be a connected Cayley digraph of a finite group $G$ of valency $m<p$, and let $A=\Aut(\Gamma)$. By the same argument as~\cite[Lemma 2.1]{C.H.Li0} we see that every prime divisor of $|A_1|$ is less than $p$. Thus the following result is a consequence of Lemma~\ref{p-CI-Q}.

\begin{lem}\label{p-power-CI-Q}
Let $n$ be a power of an odd prime $p$, let $\Gamma=\Cay(\mathrm{Q}_{4n},S)$ be a connected Cayley digraph of $\mathrm{Q}_{4n}$ with $|S|\leq p$. Then $\Gamma$ is a CI-digraph.
\end{lem}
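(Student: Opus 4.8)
Write $A=\Aut(\Gamma)$ and let $A_1$ denote the stabiliser of the vertex $1$ in $A$. Since $n=p^\ell$ is a power of the single prime $p$, the condition that $|A_1|$ be coprime to $n$ is equivalent to $p\nmid|A_1|$, and the whole proof is organised around whether or not $p$ divides $|A_1|$. The goal is to show that $\Gamma$ always falls under one of the two cases already settled in Lemma~\ref{p-CI-Q}, namely $p\nmid|A_1|$ (part~(a)) or $\Gamma$ arc-transitive with $|S|=p$ (part~(b)). The plan is therefore to prove that if $p\nmid|A_1|$ then part~(a) applies, and that if $p\mid|A_1|$ then $|S|=p$ and $\Gamma$ is forced to be arc-transitive, so that part~(b) applies.

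The case $p\nmid|A_1|$ is immediate: then $\gcd(|A_1|,n)=1$, and Lemma~\ref{p-CI-Q}(a) gives that $\Gamma$ is a CI-digraph. In particular this already disposes of every $\Gamma$ with $|S|<p$, because the stabiliser estimate recorded just before the statement (obtained by the argument of \cite[Lemma 2.1]{C.H.Li0}) shows that when the out-valency is smaller than $p$ every prime divisor of $|A_1|$ is smaller than $p$, whence $p\nmid|A_1|$. So it remains to treat $p\mid|A_1|$. Here the same valency estimate forces $|S|=p$, since $|S|<p$ would give $p\nmid|A_1|$, while $|S|\le p$ holds by hypothesis. It then suffices to prove that $A_1$ acts transitively on the out-neighbourhood $\Gamma^+(1)=S$, for then $\Gamma$ is $A$-arc-transitive and Lemma~\ref{p-CI-Q}(b) completes the proof. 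To obtain this transitivity I would consider the kernel $A_1^{[1]}$ of the action of $A_1$ on $S=\Gamma^+(1)$, so that $A_1/A_1^{[1]}$ embeds in $\mathrm{Sym}(S)\cong\Sy_p$. The key point is that the factor of $p$ in $|A_1|$ cannot sit inside $A_1^{[1]}$: applying the \cite[Lemma 2.1]{C.H.Li0}-type argument to the sections of $A_1$ lying below the first out-neighbourhood shows that every prime divisor of $|A_1^{[1]}|$ is strictly less than $p$. Hence $p\mid|A_1|$ forces $p$ to divide $|A_1/A_1^{[1]}|$, and since a subgroup of $\Sy_p$ of order divisible by $p$ contains a $p$-cycle and is therefore transitive, $A_1$ is transitive on $S$, as wanted.

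The step I expect to be the main obstacle is exactly this control of the deep stabiliser $A_1^{[1]}$: one must rule out that $A_1$ contains an element of order $p$ fixing $\{1\}\cup\Gamma^+(1)$ pointwise, equivalently that a factor of $p$ is ``hidden'' in the kernel of the action on $S$. For out-valency exactly $p$ this is the directed analogue of the classical valency--stabiliser bound and relies on the strong connectivity of $\Gamma$, which is available because $\Gamma$ is connected and vertex-transitive. Rather than re-deriving it, I would invoke the \cite[Lemma 2.1]{C.H.Li0} argument for the sections below the first level, mirroring the way the sentence preceding the lemma handles valency $<p$; once this is in place, the reduction to the two branches of Lemma~\ref{p-CI-Q} is purely formal.
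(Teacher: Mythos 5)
Your overall reduction is exactly the paper's: handle $|S|<p$ and the case $p\nmid|A_1|$ via Lemma~\ref{p-CI-Q}(a) using the \cite[Lemma 2.1]{C.H.Li0}-type bound, and show that $p\mid|A_1|$ together with $|S|=p$ forces arc-transitivity so that Lemma~\ref{p-CI-Q}(b) applies. Those first two cases are fine. The gap is in your justification of the third case: the unconditional claim that every prime divisor of $|A_1^{[1]}|$ is strictly less than $p$ is \emph{false} for connected digraphs of out-valency $p$. Indeed the lexicographic products $\overrightarrow{\mathrm{C}}_k[\overline{\mathrm{K}}_p]$ with $k\geq 3$ --- precisely the digraphs $\overrightarrow{\mathrm{C}}_{4p^{\ell-1}}[\overline{\mathrm{K}}_p]$ that arise in the paper's proof of Lemma~\ref{p-CI-Q}(b) --- are counterexamples: there $\Aut(\Gamma)\cong \Sy_p\wr\mathbb{Z}_k$, the stabilizer of a vertex is $\Sy_{p-1}\times \Sy_p^{k-1}$, and the kernel of its action on the out-neighbourhood is $\Sy_{p-1}\times \Sy_p^{k-2}$, whose order is divisible by $p$. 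The reason your proposed induction ``below the first level'' breaks down is that an element of $A_1^{[1]}$ acts on out-neighbourhoods $\Gamma^+(s)$, $s\in S$, which still have size $p$ and need not contain any vertex already fixed; so each section only embeds in a product of copies of $\Sy_p$, and the strict bound $<p$ is not available level by level.

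The implication you actually need ($p\mid|A_1|$ implies $A_1$ transitive on $S$) is nevertheless true --- it is the same assertion the paper makes inside the proof of Lemma~\ref{p-CI-cyclic} --- but the correct proof is the contrapositive and uses vertex-transitivity, not just strong connectivity. Suppose $A_1$ is intransitive on $S$. Then every orbit of $A_1$ on $S$ has size at most $p-1$, and since all pairs $(A_u,\Gamma^+(u))$ are conjugate to $(A_1,S)$ under $\Aut(\Gamma)$, every orbit of every vertex stabilizer $A_u$ on $\Gamma^+(u)$ has size at most $p-1$. Now run the kernel chain: let $K_d$ be the pointwise stabilizer in $A_1$ of the set $B_d$ of vertices at directed distance at most $d$ from $1$. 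For each $v$ at distance exactly $d$ we have $K_d\leq A_v$, so the group induced by $K_d$ on $\Gamma^+(v)$ has all orbits of size at most $p-1$ and hence order with all prime divisors less than $p$; the quotient $K_d/K_{d+1}$ embeds in the direct product of these induced groups, and $\bigcap_d K_d=1$ by strong connectivity. Hence every prime divisor of $|A_1|$ is less than $p$, i.e., $p\nmid|A_1|$. With this repair (replacing your false claim about $A_1^{[1]}$), your argument is complete and coincides with the paper's brief deduction of the lemma from Lemma~\ref{p-CI-Q}.
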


Now we are ready to prove Theorem~\ref{mainth2}.

\begin{proof}[Proof of Theorem~\ref{mainth2}.]
Let $n=p^\ell$, where $p$ is a prime and $\ell$ is a positive integer, let $G=\mathrm{Q}_{4n}=\langle a,b\mid a^{2n}=1,\, b^2=a^n,\,a^b=a^{-1}\rangle$, and let $m$ be an integer with $1\leq m\leq 2n-1$.

First, we suppose that $G$ has the $m$-DCI property. By Theorem~\ref{mainth1}, $n$ is odd, and so $p$ is odd. If $\ell \geq 2$, then it follows from Theorem~\ref{mainth1} that $m \leq p$. This shows that either $n=p$ or $m\leq p$, which completes the proof of the necessity.

Next, we prove the sufficiency. So suppose that $p$ is odd and either $n=p$ or $m\leq p$. If $n=p$, then since $\mathrm{Q}_{4p}$ is a DCI-group (by~\cite{Mu4}), $G$ has the $m$-DCI property. Now assume $m\leq p$. Let $\Cay(G,S)$ be a Cayley digraph with $|S|=m$, and let $\Cay(G,T)$ be a Cayley digraph isomorphic to $\Cay(G,S)$. Since $\Cay(G,S)\cong \Cay(G,T)$, we have $\Cay(\langle S\rangle,S)\cong \Cay(\langle T\rangle,T)$, which implies that $|\langle S\rangle|=|\langle T\rangle|$. As $G$ is a generalized quaternion group of order $4p^\ell$ with $p$ odd prime, it follows that $\langle S\rangle\cong\langle T\rangle$. According to Lemma~\ref{homogeneous}, there exists $\delta\in\Aut(G)$ such that $\langle T\rangle^\delta=\langle S\rangle$. Then we have
\[
\Cay(\langle T\rangle,T)\cong \Cay(\langle T\rangle^\delta,T^\delta)=\Cay(\langle S\rangle,T^\delta),
\]
and hence $\Cay(\langle S\rangle,S)\cong \Cay(\langle S\rangle,T^\delta)$. Set $\Gamma=\Cay(\langle S\rangle,S)$. Then $\Gamma$ is a connected $m$-valent Cayley digraph with $m=|S|\leq p$. As a subgroup of $\mathrm{Q}_{4n}$, we see that $\langle S\rangle$ is either a cyclic or a generalized quaternion subgroup of $\mathrm{Q}_{4n}$. Since Lemmas~\ref{p-CI-cyclic} and~\ref{p-power-CI-Q} assert that $\Gamma$ is a CI-digraph, there is an automorphism of $\langle S\rangle$ mapping $S$ to $T^\delta$. Again by Lemma~\ref{homogeneous}, this automorphism can be extended to an automorphism
of $G$, say $\gamma$. Then $S^\gamma=T^{\delta}$, and by taking $\sigma=\gamma\delta^{-1}$ we have $\sigma\in \Aut(G)$ and $S^\sigma=T$. This shows that $G$ has the $m$-DCI property, proving the sufficiency.
\end{proof}

\medskip
\noindent {\bf Acknowledgements:}
We would like to thank an anonymous referee for careful reading of the paper and helpful suggestions. The work was supported by the National Natural Science Foundation of China (12331013, 12311530692, 12271024, 12161141005), the 111 Project of China (B16002) and the scholarship No.~202207090064 from the China Scholarship Council. The work was done during a visit of the first author to The University of Melbourne. The first author would like to thank The University of Melbourne for its hospitality and Beijing Jiaotong University for consistent support.


\end{document}